\DeclarePairedDelimiter\abs{\lvert}{\rvert}
\newtheorem{theorem}{Theorem}[section]
\newtheorem{lemma}[theorem]{Lemma}
\newtheorem{prop}[theorem]{Proposition}
\newtheorem{example}[theorem]{Example}
\newtheorem{remark}[theorem]{Remark}
\newtheorem{defi}[theorem]{Definition}
\begin{document}

\title[$F$-invariants of tabulated virtual knots]{$F$-polynomials of tabulated virtual knots}

\author{Maxim IVANOV}
\address{Laboratory of Topology and Dynamics, Novosibirsk State University, Novosibirsk, 630090, Russia}   \email{m.ivanov2@g.nsu.ru}

\author{Andrei VESNIN}

\address{Laboratory of Topology and Dynamics, Novosibirsk State University, Novosibirsk, 630090, Russia\\ 
Sobolev Institute of Mathematics, Novosibirsk, 630090, Russia\\ and Tomsk State University, Tomsk, 634050, Russia}
\email{vesnin@math.nsc.ru} 

\keywords{Virtual knot, affine index polynomial, connected sum}

\subjclass[2010]{57M05, 20F05, 57M50}

\begin{abstract}
A sequence of $F$-polynomials $\{ F^n_K (t, \ell)\}_{n=1}^{\infty}$ of virtual knots $K$ was defined by Kaur, Prabhakar, and Vesnin in 2018.  These polynomials  have been expressed in terms of index value of crossing and $n$-writhe of $K$.  By the construction, $F$-polynomials are generalizations of the Kauffman's Affine Index Polynomial, and are invariants of virtual knot $K$. We present values of $F$-polynomials of oriented virtual knots having at most four classical crossings in a diagram.     
\end{abstract}

\thanks{This work was supported by the Laboratory of Topology and Dynamics, Novosibirsk State University (contract no. 14.Y26.31.0025 with the Ministry of Education and Science of the Russian Federation).}

\maketitle 

\section{Introduction}

Virtual knots were introduced by L.~Kauffman \cite{kauffman1999virtual} as a generalization of classical knots.  They are presented by virtual knot diagrams having classical crossings as well as virtual crossings. Equivalence  between two virtual knot diagrams can be determined through classical Reidemeister moves and virtual Reidemeister moves shown in Fig.~\ref{fig1a} and Fig.~\ref{fig1b}, respectively. 
\begin{figure}[!ht] 
\centering
\subfigure[Classical Reidemeister moves.]
{\includegraphics[scale=0.44]{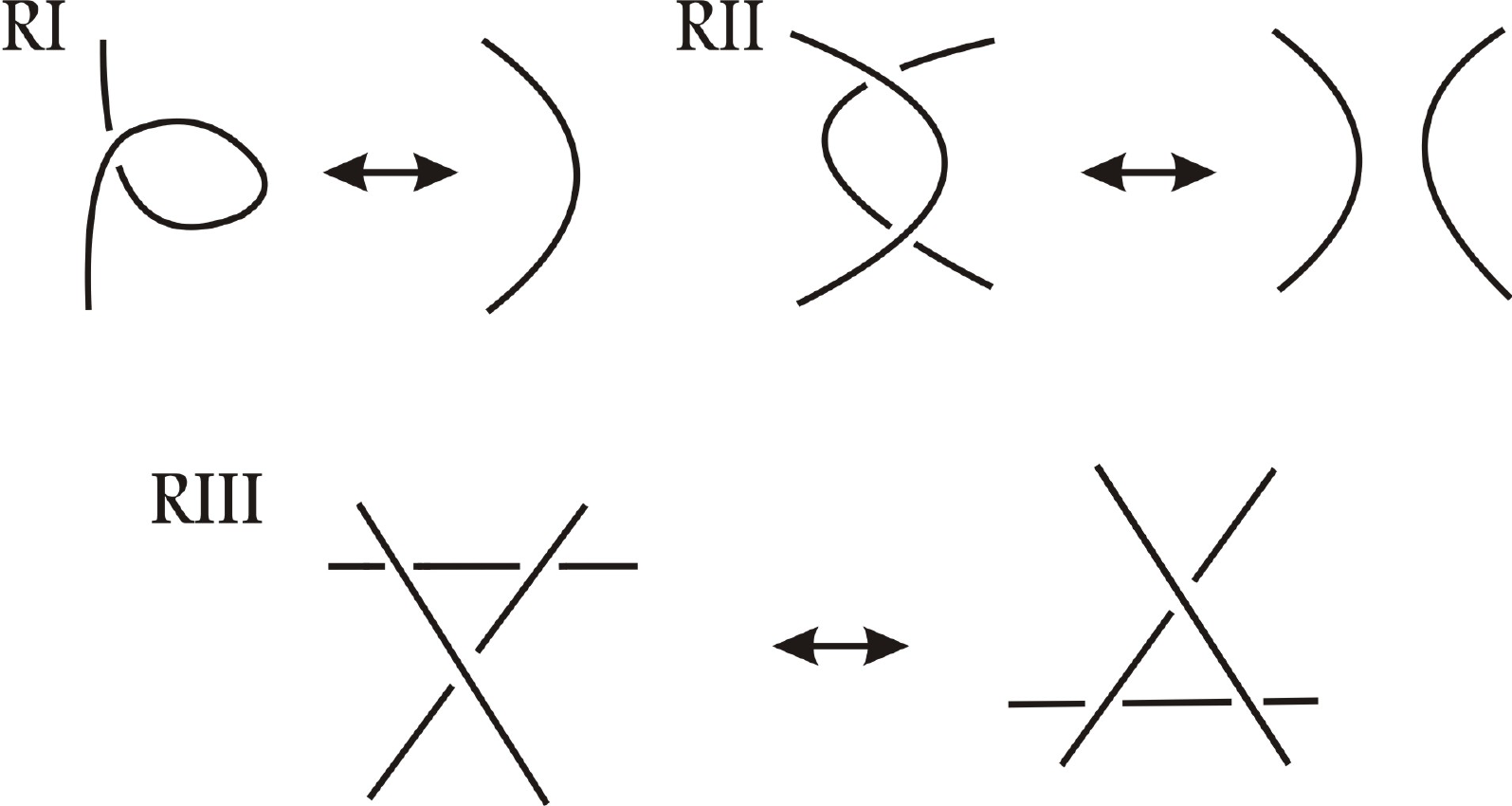} \label{fig1a}
} \medskip \subfigure[Virtual Reidemeister moves.]
{
\includegraphics[scale=.44]{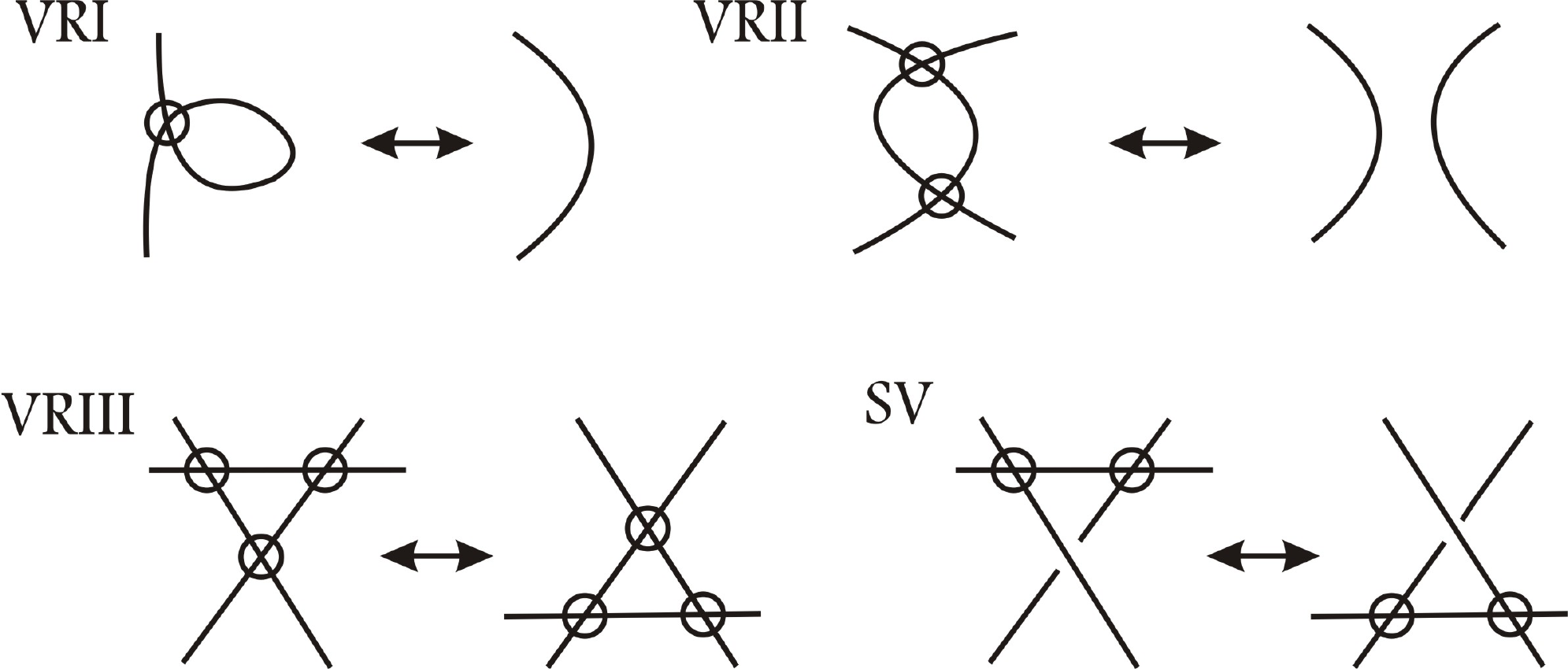} \label{fig1b}
}
\caption{Reidemeister moves.} 
\end{figure} 

Various invariants are known to distinguish two virtual knots. We are mainly interested in invariants of polynomial type. In the recent years, many polynomial invariants of virtual knots and links have been introduced. Among them are Affine Index Polynomial by L~Kauffman~\cite{kauffman2013affine}, Writhe Polynomial by Z.~Cheng and H.~Cao~\cite{cheng2013polynomial}, Wriggle Polynomial by L.~Folwaczny and L.~Kauffman~\cite{folwaczny2013linking}, Arrow Polynomial by H.~Dye and L.~Kauffman~\cite{dye2009virtual}, Extended Bracket Polynomial by L.~Kauffman~\cite{kauffman2009extended}, Index Polynomial by Y.-H.~Im, K.~Lee and S.-Y.~Lee~\cite{im2010index}, Zero Polynomial by M.-J.~Jeong~\cite{jeong2016zero},  sequences of $L$-polynomials and $F$-polynomials by K.~Kaur, M.~Prabhakar, and A.~Vesnin~\cite{KPV}. 

Let $K$ be an oriented virtual knot and $D$ be its diagram. For a positive integer $n$, in~\cite{KPV} $n$-th $F$-polynomial of $K$ was defined by assigning two weights for each classical crossing  $c \in D$. One is the \emph{index value} $\operatorname{Ind}(c)$, which was defined in~\cite{cheng2013polynomial}.  Second is the $n$-\emph{dwrithe} number $\nabla J _{n}(D)$, defined as difference between $n$-writhe and $(-n)$-writhe, with $n$-writhe defined in~\cite{satoh2014writhes}. For each classical crossing $c$ of the diagram $D$ we smooth it locally to obtain a virtual knot diagram $D_c$ with one less classical crossing. The smoothing rule, which we call a smoothing against orientation, is  shown below in Fig.~\ref{fig105}. After smoothing, we calculate $n$-dwrithe value $\nabla J_{n}(D_{c})$ of $D_c$ and assign it to the crossing $c$ of $D$.  An \emph{$n$-th $F$-polynomial} of oriented virtual knot $K$ is defined in~\cite{KPV} via its diagram $D$ as given in definition~\ref{f-pol} below.   

The paper is organized as follows. In Section~\ref{section-2} we give some basic definitions and known results on $F$-polynomials and explain the polynomial computations in details in Example~\ref{ex2.5}. Recall that virtual knots up to four crossings were  tabulated by Jeremy Green under the supervision of Dror Bar-Natan~\cite{Green} (see also Appendix A in the book~\cite{dye}). For the reader's convenience we present these diagrams in Tables~\ref{table-1}, \ref{table-2}, and \ref{table-3}, with the orientation indicated.The knots are denoted from \textbf{2.1} to \textbf{4.108}, where the first integer means the number of classical crossings. In Theorem~\ref{theorem3-1} $F$-polynomials of these virtual knots are given, see Tables~\ref{table-4}--\ref{table-8}.  In Proposition~\ref{prop3.2} we demonstrate that there exists an infinite family of oriented virtual knots with the same $F$-polynomials.

\section{Basic definitions} \label{section-2}

Let $D$ be an oriented virtual knot diagram. By an \emph{arc} we mean an edge between two consecutive classical crossings along the orientation.  The sign of classical crossing $c \in C(D)$, denoted by $\operatorname{sgn}(c)$, is defined as in Fig.~\ref{fig102}.  
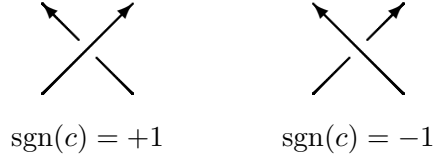
\begin{figure}[!ht]
\centering 
\unitlength=0.6mm
\begin{picture}(0,30)
\thicklines
\qbezier(-40,10)(-40,10)(-20,30)
\qbezier(-40,30)(-40,30)(-32,22) 
\qbezier(-20,10)(-20,10)(-28,18)
\put(-35,25){\vector(-1,1){5}}
\put(-25,25){\vector(1,1){5}}
\put(-30,0){\makebox(0,0)[cc]{$\operatorname{sgn}(c)=+1$}}
\qbezier(40,10)(40,10)(20,30)
\qbezier(40,30)(40,30)(32,22) 
\qbezier(20,10)(20,10)(28,18)
\put(25,25){\vector(-1,1){5}}
\put(35,25){\vector(1,1){5}}
\put(30,0){\makebox(0,0)[cc]{$\operatorname{sgn}(c)=-1$}}
\end{picture}
\caption{Crossing signs.} \label{fig102}
\end{figure}

Now assign an integer value to each arc in $D$ in such a way that the labeling around each crossing point of $D$ follows the rule as shown in Fig.~\ref{fig103}. L.~Kauffman proved in \cite[Proposition~4.1]{kauffman2013affine} that such integer labeling, called a \emph{Cheng coloring}, always exists for an oriented virtual knot diagram. Indeed, for an arc $\alpha$ of $D$ one can take label $\lambda (\alpha) = \sum_{c \in O(\alpha)} \operatorname{sgn} (c)$, where $O(\alpha)$ denotes the set of crossings first met as overcrossings on traveling along orientation, starting at the arc $\alpha$. 
\begin{figure}[!ht]
\centering 
\unitlength=0.6mm
\begin{picture}(0,30)(0,5)
\thicklines
\qbezier(-70,10)(-70,10)(-50,30)
\qbezier(-70,30)(-70,30)(-62,22) 
\qbezier(-50,10)(-50,10)(-58,18)
\put(-65,25){\vector(-1,1){5}}
\put(-55,25){\vector(1,1){5}}
\put(-75,34){\makebox(0,0)[cc]{$b+1$}}
\put(-75,8){\makebox(0,0)[cc]{$a$}}
\put(-45,8){\makebox(0,0)[cc]{$b$}}
\put(-45,34){\makebox(0,0)[cc]{$a-1$}}
\qbezier(10,10)(10,10)(-10,30)
\qbezier(10,30)(10,30)(2,22) 
\qbezier(-10,10)(-10,10)(-2,18)
\put(-5,25){\vector(-1,1){5}}
\put(5,25){\vector(1,1){5}}
\put(-15,34){\makebox(0,0)[cc]{$b+1$}}
\put(-15,8){\makebox(0,0)[cc]{$a$}}
\put(15,8){\makebox(0,0)[cc]{$b$}}
\put(15,34){\makebox(0,0)[cc]{$a-1$}}
\qbezier(70,10)(70,10)(50,30)
\qbezier(70,30)(70,30)(50,10) 
\put(55,25){\vector(-1,1){5}}
\put(65,25){\vector(1,1){5}}
\put(60,20){\circle{4}}
\put(45,34){\makebox(0,0)[cc]{$b$}}
\put(45,8){\makebox(0,0)[cc]{$a$}}
\put(75,8){\makebox(0,0)[cc]{$b$}}
\put(75,34){\makebox(0,0)[cc]{$a$}}
\end{picture}
\caption{Labeling around crossing.} \label{fig103}
\end{figure}
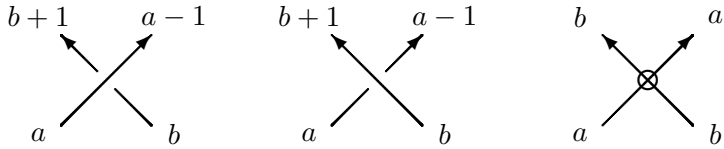

In \cite{cheng2013polynomial}, Z.~Cheng and H.~Gao assigned an integer value, called \emph{index value}, to each classical crossing $c$ of a virtual knot diagram and denoted it by $\operatorname{Ind}(c)$. It was proved  \cite[Theorem~3.6]{cheng2013polynomial} that  the following relation holds: 
\begin{equation}
\operatorname{Ind}(c) = \operatorname{sgn} (c)(a-b-1)  \label{eq2.2}
\end{equation}
with $a$ and $b$ be labels as presented in Fig.~\ref{fig103}. Recall that the Kauffman's affine index polynomial  can be defined as   
\begin{equation}
P_{D}(t) = \sum _{c\in C(D)} \operatorname{sgn}(c) (t^{\operatorname{Ind}(c)}-1), 
\end{equation}
where the summation runs over the set $C(D)$ of classical crossings of $D$.   

In \cite{satoh2014writhes}, S.~Satoh and K.~Taniguchi introduced the $n$-th writhe. For each $n \in \mathbb{Z}\setminus \{0\}$ the  \emph{$n$-th writhe $J_n(D)$} of an oriented virtual link diagram $D$ is defined as the number of positive sign crossings minus number of negative sign crossings of $D$ with index value $n$. Remark, that $J_n (D)$ is indeed coefficient of $t^n$ in the affine index polynomial. This $n$-th writhe is a virtual knot invariant, for more details we refer to \cite{satoh2014writhes}. Using $n$-th writhe, a new invariant was defined in~\cite{KPV} as follows. Let $n\in \mathbb{N}$ and $D$ be an oriented virtual knot diagram. Then the \emph{$n$-th dwrithe} of $D$, denoted by $\nabla J_{n}(D)$, is defined as 
 $$
 \nabla J_{n}(D)=J_{n}(D)-J_{-n}(D).
 $$

\smallskip 

\begin{remark} \label{rem2.1}
{\rm 
The $n$-th dwrithe  $\nabla J_{n}(D)$ is a virtual knot invariant, since  $n$-th writhe $J_n(D)$ is an oriented virtual knot invariant by~\cite{satoh2014writhes}.  Moreover, $\nabla J_{n}(D) =0$ for any classical knot diagram. 
}
\end{remark} 

\smallskip 
  
 \begin{remark} \label{rem2.2}
 {\rm 
Let $D$ be a virtual knot diagram. Consider the set of all affine index values  of crossing points:  
$$
S(D)=\{ \abs{ \operatorname{Ind}(c) } \, : \, c \in C(D)\} \subset \mathbb{N}. 
$$ 
Then $\nabla J_n(D)=0$ for any $n\in \mathbb{N}\setminus S(D)$. Therefore, for any virtual knot diagram $D$ there exists $n_{0}$ such that $\nabla J_n(D)=0$ for any $n$ greater than $n_{0}$.} 
\end{remark}
  
\smallskip 
     
Let $D^-$ be the \emph{reverse} of $D$, obtained from $D$ by reversing the orientation and let $D^*$ be the \emph{mirror image} of $D$, obtained by switching all the classical crossings in~$D$. 

\smallskip 

\begin{lemma} [\cite{KPV}] \label{lemma1}
If $D$ is an oriented virtual knot diagram, then $\nabla J_{n}(D^*)=\nabla J_{n}(D)$ and $\nabla J_{n}(D^-)=-\nabla J_{n}(D)$. 
\end{lemma}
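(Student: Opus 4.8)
The plan is to track, for each classical crossing $c$, how the pair $\bigl(\operatorname{sgn}(c),\operatorname{Ind}(c)\bigr)$ transforms under $D\mapsto D^*$ and $D\mapsto D^-$, and then to read off the effect on the writhes $J_n$ and hence on $\nabla J_n$. I will establish two purely local facts: (i) in $D^*$ the crossing $c$ has sign $-\operatorname{sgn}_D(c)$ and index value $-\operatorname{Ind}_D(c)$; (ii) in $D^-$ the crossing $c$ has sign $\operatorname{sgn}_D(c)$ and index value $-\operatorname{Ind}_D(c)$. Granting these, the lemma follows by counting. Since $J_n(D)=\#\{c:\operatorname{sgn}(c)=+1,\,\operatorname{Ind}(c)=n\}-\#\{c:\operatorname{sgn}(c)=-1,\,\operatorname{Ind}(c)=n\}$, fact (ii) puts the crossings of $D^-$ of index $n$ and given sign in bijection with the crossings of $D$ of index $-n$ and the same sign, so $J_n(D^-)=J_{-n}(D)$ and therefore $\nabla J_n(D^-)=J_n(D^-)-J_{-n}(D^-)=J_{-n}(D)-J_n(D)=-\nabla J_n(D)$; fact (i) gives instead $J_n(D^*)=-J_{-n}(D)$, whence $\nabla J_n(D^*)=J_n(D^*)-J_{-n}(D^*)=-J_{-n}(D)+J_n(D)=\nabla J_n(D)$.

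The sign statements are classical and local. Switching the over/under information at a crossing while keeping both local orientations interchanges the two strands, hence reverses the crossing sign (compare the two pictures of Fig.~\ref{fig102}), so $\operatorname{sgn}_{D^*}(c)=-\operatorname{sgn}_D(c)$. Reversing the orientation of the whole knot reverses both strands at every crossing simultaneously and so changes no crossing sign, giving $\operatorname{sgn}_{D^-}(c)=\operatorname{sgn}_D(c)$.

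For the index values I will use a Cheng coloring $\lambda$ of $D$ together with \eqref{eq2.2}. Inspecting Fig.~\ref{fig103}, the labeling rule does not depend on the sign of the crossing: if $o^-,o^+$ are the over-arcs incoming to and outgoing from $c$, and $u^-,u^+$ the under-arcs incoming and outgoing, then $\lambda(o^+)=\lambda(o^-)-\operatorname{sgn}(c)$, $\lambda(u^+)=\lambda(u^-)+\operatorname{sgn}(c)$, and \eqref{eq2.2} can be rewritten as $\operatorname{Ind}(c)=\lambda(o^-)-\lambda(u^-)-\operatorname{sgn}(c)$. Passing to $D^*$ keeps the orientation but interchanges the over- and under-strands, so the incoming over-arc is now $u^-$ and the incoming under-arc is $o^-$; because the labeling rule is sign-independent, $\lambda$ remains a Cheng coloring of $D^*$, and there $\operatorname{Ind}_{D^*}(c)=\lambda(u^-)-\lambda(o^-)-\operatorname{sgn}_{D^*}(c)=\lambda(u^-)-\lambda(o^-)+\operatorname{sgn}_D(c)=-\operatorname{Ind}_D(c)$. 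Passing to $D^-$ keeps the over/under-strands but interchanges incoming and outgoing arcs; one checks directly against the labeling rule that $-\lambda$ is a Cheng coloring of $D^-$, and then $\operatorname{Ind}_{D^-}(c)=-\lambda(o^+)+\lambda(u^+)-\operatorname{sgn}_D(c)=-\bigl(\lambda(o^-)-\operatorname{sgn}_D(c)\bigr)+\bigl(\lambda(u^-)+\operatorname{sgn}_D(c)\bigr)-\operatorname{sgn}_D(c)=-\operatorname{Ind}_D(c)$. Equivalently one can argue from the explicit description $\lambda(\alpha)=\sum_{c\in O(\alpha)}\operatorname{sgn}(c)$ recalled in the excerpt: under the mirror the set $O(\alpha)$ is replaced by its complement in $C(D)$ and all signs flip, and under the reverse $O(\alpha)$ is again replaced by a complementary set, both of which change the coloring by at most a global constant beyond the desired sign change.

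I expect the only real obstacle to be bookkeeping: getting every sign right in the index computation, in particular verifying that it is $\lambda$ that colors $D^*$ while it is $-\lambda$ that colors $D^-$, and noting that the additive-constant ambiguity of a Cheng coloring is harmless since $\operatorname{Ind}(c)$ depends only on the difference $\lambda(o^-)-\lambda(u^-)$. Once Fig.~\ref{fig103} is read carefully these are routine local verifications, and no global feature of the diagram is needed.
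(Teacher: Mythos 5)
Your proof is correct: the local facts $\bigl(\operatorname{sgn},\operatorname{Ind}\bigr)\mapsto\bigl(-\operatorname{sgn},-\operatorname{Ind}\bigr)$ under mirroring and $\bigl(\operatorname{sgn},\operatorname{Ind}\bigr)\mapsto\bigl(\operatorname{sgn},-\operatorname{Ind}\bigr)$ under reversal are verified accurately (including the check that $\lambda$ colors $D^*$ and $-\lambda$ colors $D^-$), and they do yield $J_n(D^*)=-J_{-n}(D)$ and $J_n(D^-)=J_{-n}(D)$, hence the lemma. The paper itself gives no proof, citing \cite{KPV}; your argument is the standard one used there, so there is nothing to flag.
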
 

\smallskip 

Let $c$ be a classical crossing  of an oriented virtual knot diagram $D$. We consider two smoothings at $c$ as shown in Fig.~\ref{fig105}, depending on arcs orientations. Both smoothings will be referred to  as \emph{against orientation} smoothing.  
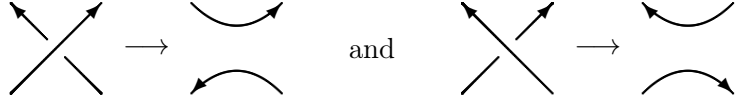
\begin{figure}[!ht]
\centering 
\unitlength=0.6mm
\begin{picture}(0,20)(0,10)
\thicklines
\qbezier(-80,10)(-80,10)(-60,30)
\qbezier(-80,30)(-80,30)(-72,22) 
\qbezier(-60,10)(-60,10)(-68,18)
\put(-75,25){\vector(-1,1){5}}
\put(-65,25){\vector(1,1){5}}
\put(-50,20){\makebox(0,0)[cc]{$\longrightarrow$}}
\qbezier(-40,30)(-30,20)(-20,30)
\qbezier(-40,10)(-30,20)(-20,10) 
\put(-35,15){\vector(-1,-1){5}}
\put(-25,25){\vector(1,1){5}}
\put(0,20){\makebox(0,0)[cc]{and}}
\qbezier(20,30)(20,30)(40,10)
\qbezier(20,10)(20,10)(28,18) 
\qbezier(40,30)(40,30)(32,22)
\put(25,25){\vector(-1,1){5}}
\put(35,25){\vector(1,1){5}}
\put(50,20){\makebox(0,0)[cc]{$\longrightarrow$}}
\qbezier(60,30)(70,20)(80,30)
\qbezier(60,10)(70,20)(80,10) 
\put(65,25){\vector(-1,1){5}}
\put(75,15){\vector(1,-1){5}}
\end{picture}
\caption{Smoothing against orientation.} \label{fig105}
\end{figure}

Let us denote by $D_{c}$ the oriented diagram obtained from $D$ by against orientation smoothing at $c$.  The orientation of $D_{c}$ is induced by the orientation of smoothing.  Since $D$ is a virtual knot diagram, $D_{c}$ is also a virtual knot diagram. 
 
\smallskip 

\begin{defi} [\cite{KPV}] \label{f-pol} 
{\rm Let $D$ be an oriented virtual knot diagram and $n$ be a positive integer. Then \emph{$n$-th  $F$-polynomial} of $D$ is defined as 
$$
\begin{gathered}
F_{D}^{n}(t,\ell) = \sum_{c \in C(D)} \operatorname{sgn}(c)t^{\text{Ind}(c)} \ell^{\nabla J_{n}(D_{c})}  
\qquad \qquad \qquad  \\  \qquad \qquad \qquad \qquad 
-  \sum _{c\in T_{n}(D)} \operatorname{sgn}(c) \ell^{\nabla J_{n}(D_{c})} - \sum _{c\notin T_{n}(D)} \operatorname{sgn} (c) \ell^{\nabla J_{n}(D)}, 
\end{gathered}
$$ 
where  $T_{n}(D)=\{c \in C(D) :  | \nabla J_{n}(D_{c}) | \, =  \, | \nabla J_{n}(D) | \}$. 
}
\end{defi}

We illustrate computation of $F$-polynomials in the following example. 

%%%%%%%%%%%%%%%%%%%%%%%
\begin{example} \label{ex2.5}
{\rm 
Let us consider an oriented virtual knot diagram $D = \textbf{3.1}$ presented in~Fig.~\ref{fig6}. 
\begin{figure}[!ht]
\centering 
\unitlength=0.34mm
\begin{picture}(0,100)
\put(-60,0){\begin{picture}(0,100)
\thicklines
\put(30,70){\vector(0,-1){5}}
\qbezier(-10,80)(-10,80)(10,100)
\qbezier(-10,100)(-10,100)(-3,93) 
\qbezier(3,87)(10,80)(10,80)
\qbezier(30,60)(30,60)(30,80)
\put(0,70){\circle{6}}
\qbezier(-10,60)(-10,60)(10,80)
\qbezier(-10,80)(-10,80)(10,60) 
\qbezier(30,40)(30,40)(30,60)
\qbezier(-10,60)(-10,60)(10,40)
\qbezier(-10,40)(-3,47)(-3,47)
\qbezier(3,53)(3,53)(10,60)
\qbezier(10,40)(10,40)(30,20)
\qbezier(10,20)(10,20)(30,40)
\qbezier(-10,20)(-10,20)(-10,40)
\put(20,30){\circle{6}}
\qbezier(-10,20)(-10,20)(-3,13)
\qbezier(3,7)(10,0)(10,0)
\qbezier(-10,0)(-10,0)(10,20)
\qbezier(10,0)(30,0)(30,20)
\qbezier(-10,0)(-30,0)(-30,20)
\qbezier(-30,20)(-30,20)(-30,80)
\qbezier(-30,80)(-30,100)(-10,100)
\qbezier(30,80)(30,100)(10,100)
\put(-15,90){\makebox(0,0)[cc]{\footnotesize $\alpha$}}
\put(-15,50){\makebox(0,0)[cc]{\footnotesize $\beta$}}
\put(-15,10){\makebox(0,0)[cc]{\footnotesize $\gamma$}}
\end{picture}} 
%%%
\put(60,0){\begin{picture}(0,100)
\thicklines
\put(30,70){\vector(0,-1){5}}
\qbezier(-10,80)(-10,80)(10,100)
\qbezier(-10,100)(-10,100)(-3,93) 
\qbezier(3,87)(10,80)(10,80)
\qbezier(30,60)(30,60)(30,80)
\put(0,70){\circle{6}}
\qbezier(-10,60)(-10,60)(10,80)
\qbezier(-10,80)(-10,80)(10,60) 
\qbezier(30,40)(30,40)(30,60)
\qbezier(-10,60)(-10,60)(10,40)
\qbezier(-10,40)(-3,47)(-3,47)
\qbezier(3,53)(3,53)(10,60)
\qbezier(10,40)(10,40)(30,20)
\qbezier(10,20)(10,20)(30,40)
\qbezier(-10,20)(-10,20)(-10,40)
\put(20,30){\circle{6}}
\qbezier(-10,20)(-10,20)(-3,13)
\qbezier(3,7)(10,0)(10,0)
\qbezier(-10,0)(-10,0)(10,20)
\qbezier(10,0)(30,0)(30,20)
\qbezier(-10,0)(-30,0)(-30,20)
\qbezier(-30,20)(-30,20)(-30,80)
\qbezier(-30,80)(-30,100)(-10,100)
\qbezier(30,80)(30,100)(10,100)
%\put(-15,90){\makebox(0,0)[cc]{\footnotesize $\alpha$}}
%\put(-15,50){\makebox(0,0)[cc]{\footnotesize $\beta$}}
%\put(-15,10){\makebox(0,0)[cc]{\footnotesize $\gamma$}}
\put(-35,50){\makebox(0,0)[cc]{\footnotesize $1$}}
\put(35,80){\makebox(0,0)[cc]{\footnotesize $0$}}
\put(-20,80){\makebox(0,0)[cc]{\footnotesize $-1$}}
\put(15,80){\makebox(0,0)[cc]{\footnotesize $0$}}
\put(-15,60){\makebox(0,0)[cc]{\footnotesize $0$}}
\put(20,60){\makebox(0,0)[cc]{\footnotesize $-1$}}
\put(20,40){\makebox(0,0)[cc]{\footnotesize $-1$}}
\put(-20,30){\makebox(0,0)[cc]{\footnotesize $-2$}}
\put(40,15){\makebox(0,0)[cc]{\footnotesize $-1$}}
\put(10,15){\makebox(0,0)[cc]{\footnotesize $0$}}
\put(-3,7){\vector(-1,-1){5}}
\put(-3,13){\vector(-1,1){5}}
\put(3,53){\vector(1,1){5}}
\put(3,47){\vector(1,-1){5}}
\put(3,93){\vector(1,1){5}}
\put(3,87){\vector(1,-1){5}}
\end{picture}} 
\end{picture}
\caption{Oriented virtual diagram $D = \textbf{3.1}$ and its labeling.} \label{fig5}
\end{figure}
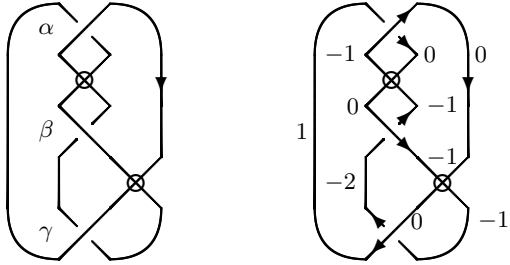
The diagram $D$ has three classical crossings denoted by $\alpha$, $\beta$, and $\gamma$, as it is shown in the left-hand picture. In the right-hand picture we presented orientation of arcs for each classical crossing and the corresponding labeling, satisfying the rule given in Fig.~\ref{fig103}.  Crossing signs can easily be found from arc orientations around crossing points given in Fig.~\ref{fig5}: 
$\operatorname{sgn}(\alpha) = \operatorname{sgn}(\gamma) =-1$  and $\operatorname{sgn}(\beta)=1$. 
Index values can be calculated directly from crossing signs and labeling of arcs by Eq.~(\ref{eq2.2}): 
$\operatorname{Ind}(\alpha)= - 1$,  $\operatorname{Ind}(\beta)=1$ and $\operatorname{Ind}(\gamma)=2$. 
Therefore, only the following writhe numbers can be non-trivial: $J_1(D)$, $J_{-1}(D)$, and $J_2(D)$. It is easy to see, that $J_1(D) = \operatorname{sgn} (\beta) = 1$, $J_{-1} (D) = \operatorname{sgn}(\alpha) = -1$, and $J_{2} (D) = \operatorname{sgn}(\gamma) = -1$. Then $\nabla J_1(D) = J_1(D) - J_{-1}(D) = 2$ and $\nabla J_2 (D) = J_2(D) - J_{-2}(D) =-1$. For any $n \geq 3$ we have $\nabla J_n (D) =0$.  

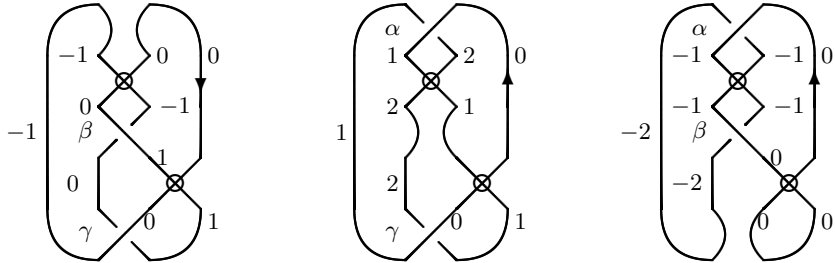
\begin{figure}[!ht]
\centering 
\unitlength=0.34mm
\begin{picture}(0,100)(0,0)
\put(-120,0){\begin{picture}(0,100)
\thicklines
\put(30,70){\vector(0,-1){5}}
\qbezier(-10,80)(0,90)(-10,100)
\qbezier(10,80)(0,90)(10,100) 
\qbezier(30,60)(30,60)(30,80)
\put(0,70){\circle{6}}
\qbezier(-10,60)(-10,60)(10,80)
\qbezier(-10,80)(-10,80)(10,60) 
\qbezier(30,40)(30,40)(30,60)
\qbezier(-10,60)(-10,60)(10,40)
\qbezier(-10,40)(-3,47)(-3,47)
\qbezier(3,53)(3,53)(10,60)
\qbezier(10,40)(10,40)(30,20)
\qbezier(10,20)(10,20)(30,40)
\qbezier(-10,20)(-10,20)(-10,40)
\put(20,30){\circle{6}}
\qbezier(-10,20)(-10,20)(-3,13)
\qbezier(3,7)(10,0)(10,0)
\qbezier(-10,0)(-10,0)(10,20)
\qbezier(10,0)(30,0)(30,20)
\qbezier(-10,0)(-30,0)(-30,20)
\qbezier(-30,20)(-30,20)(-30,80)
\qbezier(-30,80)(-30,100)(-10,100)
\qbezier(30,80)(30,100)(10,100)
%\put(-15,90){\makebox(0,0)[cc]{\footnotesize $\alpha$}}
\put(-15,50){\makebox(0,0)[cc]{\footnotesize $\beta$}}
\put(-15,10){\makebox(0,0)[cc]{\footnotesize $\gamma$}}
\put(-40,50){\makebox(0,0)[cc]{\footnotesize $-1$}}
\put(35,80){\makebox(0,0)[cc]{\footnotesize $0$}}
\put(-20,80){\makebox(0,0)[cc]{\footnotesize $-1$}}
\put(15,80){\makebox(0,0)[cc]{\footnotesize $0$}}
\put(-15,60){\makebox(0,0)[cc]{\footnotesize $0$}}
\put(20,60){\makebox(0,0)[cc]{\footnotesize $-1$}}
\put(15,40){\makebox(0,0)[cc]{\footnotesize $1$}}
\put(-20,30){\makebox(0,0)[cc]{\footnotesize $0$}}
\put(35,15){\makebox(0,0)[cc]{\footnotesize $1$}}
\put(10,15){\makebox(0,0)[cc]{\footnotesize $0$}}
%\put(-3,7){\vector(-1,-1){5}}
%\put(-3,13){\vector(-1,1){5}}
%\put(3,53){\vector(1,1){5}}
%\put(3,47){\vector(1,-1){5}}
%\put(3,93){\vector(1,1){5}}
%\put(3,87){\vector(1,-1){5}}
\end{picture}} 
%%%
\put(0,0){\begin{picture}(0,100)
\thicklines
\put(30,70){\vector(0,1){5}}
\qbezier(-10,80)(-10,80)(10,100)
\qbezier(-10,100)(-10,100)(-3,93) 
\qbezier(3,87)(10,80)(10,80)
\qbezier(30,60)(30,60)(30,80)
\put(0,70){\circle{6}}
\qbezier(-10,60)(-10,60)(10,80)
\qbezier(-10,80)(-10,80)(10,60) 
\qbezier(30,40)(30,40)(30,60)
\qbezier(-10,60)(0,50)(-10,40)
\qbezier(10,40)(0,50)(10,60)
\qbezier(10,40)(10,40)(30,20)
\qbezier(10,20)(10,20)(30,40)
\qbezier(-10,20)(-10,20)(-10,40)
\put(20,30){\circle{6}}
\qbezier(-10,20)(-10,20)(-3,13)
\qbezier(3,7)(10,0)(10,0)
\qbezier(-10,0)(-10,0)(10,20)
\qbezier(10,0)(30,0)(30,20)
\qbezier(-10,0)(-30,0)(-30,20)
\qbezier(-30,20)(-30,20)(-30,80)
\qbezier(-30,80)(-30,100)(-10,100)
\qbezier(30,80)(30,100)(10,100)
\put(-15,90){\makebox(0,0)[cc]{\footnotesize $\alpha$}}
%\put(-15,50){\makebox(0,0)[cc]{\footnotesize $\beta$}}
\put(-15,10){\makebox(0,0)[cc]{\footnotesize $\gamma$}}
\put(-35,50){\makebox(0,0)[cc]{\footnotesize $1$}}
\put(35,80){\makebox(0,0)[cc]{\footnotesize $0$}}
\put(-15,80){\makebox(0,0)[cc]{\footnotesize $1$}}
\put(15,80){\makebox(0,0)[cc]{\footnotesize $2$}}
\put(-15,60){\makebox(0,0)[cc]{\footnotesize $2$}}
\put(15,60){\makebox(0,0)[cc]{\footnotesize $1$}}
\put(-15,30){\makebox(0,0)[cc]{\footnotesize $2$}}
\put(35,15){\makebox(0,0)[cc]{\footnotesize $1$}}
\put(10,15){\makebox(0,0)[cc]{\footnotesize $0$}}
%\put(-3,7){\vector(-1,-1){5}}
%\put(-3,13){\vector(-1,1){5}}
%\put(3,53){\vector(1,1){5}}
%\put(3,47){\vector(1,-1){5}}
%\put(3,93){\vector(1,1){5}}
%\put(3,87){\vector(1,-1){5}}
\end{picture}} 
%%%
\put(120,0){\begin{picture}(0,100)
\thicklines
\put(30,70){\vector(0,1){5}}
\qbezier(-10,80)(-10,80)(10,100)
\qbezier(-10,100)(-10,100)(-3,93) 
\qbezier(3,87)(10,80)(10,80)
\qbezier(30,60)(30,60)(30,80)
\put(0,70){\circle{6}}
\qbezier(-10,60)(-10,60)(10,80)
\qbezier(-10,80)(-10,80)(10,60) 
\qbezier(30,40)(30,40)(30,60)
\qbezier(-10,60)(-10,60)(10,40)
\qbezier(-10,40)(-3,47)(-3,47)
\qbezier(3,53)(3,53)(10,60)
\qbezier(10,40)(10,40)(30,20)
\qbezier(10,20)(10,20)(30,40)
\qbezier(-10,20)(-10,20)(-10,40)
\put(20,30){\circle{6}}
\qbezier(-10,20)(0,10)(-10,0)
\qbezier(10,20)(0,10)(10,0)
\qbezier(10,0)(30,0)(30,20)
\qbezier(-10,0)(-30,0)(-30,20)
\qbezier(-30,20)(-30,20)(-30,80)
\qbezier(-30,80)(-30,100)(-10,100)
\qbezier(30,80)(30,100)(10,100)
\put(-15,90){\makebox(0,0)[cc]{\footnotesize $\alpha$}}
\put(-15,50){\makebox(0,0)[cc]{\footnotesize $\beta$}}
%\put(-15,10){\makebox(0,0)[cc]{\footnotesize $\gamma$}}
\put(-40,50){\makebox(0,0)[cc]{\footnotesize $-2$}}
\put(35,80){\makebox(0,0)[cc]{\footnotesize $0$}}
\put(-20,80){\makebox(0,0)[cc]{\footnotesize $-1$}}
\put(20,80){\makebox(0,0)[cc]{\footnotesize $-1$}}
\put(-20,60){\makebox(0,0)[cc]{\footnotesize $-1$}}
\put(20,60){\makebox(0,0)[cc]{\footnotesize $-1$}}
\put(15,40){\makebox(0,0)[cc]{\footnotesize $0$}}
\put(-20,30){\makebox(0,0)[cc]{\footnotesize $-2$}}
\put(35,15){\makebox(0,0)[cc]{\footnotesize $0$}}
\put(10,15){\makebox(0,0)[cc]{\footnotesize $0$}}
%\put(-3,7){\vector(-1,-1){5}}
%\put(-3,13){\vector(-1,1){5}}
%\put(3,53){\vector(1,1){5}}
%\put(3,47){\vector(1,-1){5}}
%\put(3,93){\vector(1,1){5}}
%\put(3,87){\vector(1,-1){5}}
\end{picture}} 
\end{picture}
\caption{Oriented virtual diagrams $D_{\alpha}$, $D_{\beta}$, and $D_{\gamma}$.} \label{fig6}
\end{figure}

The result of calculations for oriented virtual knot diagrams $D_{\alpha}$, $D_{\beta}$, and $D_{\gamma}$, presented in Fig.~\ref{fig6}, is given in Table~\ref{t1}. 
\begin{table}[ht]
\caption{Values of $\operatorname{sgn}$, $\operatorname{Ind}$, and dwrithe for diagrams from Fig.~\ref{fig6}.}
\begin{tabular}{l l l l } \hline
 & Sign & index value & dwrihe \\ \hline
$D_{\alpha}$ & $\operatorname{sgn}(\beta) = 1$ & $\operatorname{Ind}(\beta)=1$ & $\nabla J_{1}(D_{\alpha}) =  0$ \\ 
& $\operatorname{sgn}(\gamma)=1$ &  $\operatorname{Ind}(\gamma)=-1$& $\nabla J_{2}(D_{\alpha}) = 0$ \\  \hline
$D_{\beta}$ & $\operatorname{sgn}(\alpha)=-1$ & $\operatorname{Ind}(\alpha)=-1$ & $\nabla J_{1}(D_{\beta})=0$ \\ 
& $\operatorname{sgn}(\gamma)=1$ & $\operatorname{Ind}(\gamma)=-1$ & $\nabla J_{2}(D_{\beta}) = 0$   \\ \hline
$D_{\gamma}$ & $\operatorname{sgn}(\alpha)=1$ & $\operatorname{Ind}(\alpha)= 1$ &  $\nabla J_{1}(D_{\gamma})= 0$ \\ 
& $\operatorname{sgn}(\beta)=-1$ & $\operatorname{Ind}(\beta)= 1$ & $\nabla J_{2}(D_{\gamma})=0$ \\ \hline
\end{tabular} \label{t1}
\end{table} 
Basing on these calculations we obtain that $T_{1} (D) = \emptyset$, $T_{2} (D) = \emptyset$,  and $F$-polynomials for diagram $D$. Namely, for $n = 1$ and $n=2$ we get 
$$
F^{1}_{D}(t,\ell) = -t^{-1} + t - t^{2} + \ell^{2}, \qquad F^{2}_{D}(t,\ell)= -t^{-1} + t - t^{2} + \ell^{-1}.   
$$
For all $n \geq 3$ $F$-polynomials coincide with the Affine Index Polynomial:  
$$
F^{n}_{D}(t,\ell) = P_D (t) = -t^{-1} + t - t^{2} +1.
$$
Through calculations of $F$-polynomial we considered oriented virtual knot diagram $D$ of $\textbf{3.1}$ with the orientation given in Figure~ \ref{fig6}.  The $F$-polynomials of $\textbf{3.1}$ presented in $\ref{table-4}$ were done for the opposite orientation. 
We recall that changing orientation of the diagram leads to changing the sign of the dwrith number. This completes the Example~\ref{ex2.5}. 
}
\end{example}

%\smallskip 

\begin{theorem} [\cite{KPV}] \label{fth} 
For any positive integer $n$ the polynomial $F^{n}_{K}(t,\ell)$ is an oriented virtual knot invariant. 
\end{theorem}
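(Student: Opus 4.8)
\section*{Proof proposal}

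The plan is to prove that $F^{n}_{D}$ does not change under the generalized Reidemeister moves of Fig.~\ref{fig1a} and Fig.~\ref{fig1b} together with the mixed (detour) move; since $F^{n}$ is defined diagram by diagram, this suffices. The tools I will use are: the $n$-dwrithe $\nabla J_{n}$ is an oriented virtual knot invariant (Remark~\ref{rem2.1}) and its behaviour under orientation reversal is governed by Lemma~\ref{lemma1}; the standard behaviour of $\operatorname{sgn}(c)$ and $\operatorname{Ind}(c)$ under Reidemeister moves from affine index polynomial theory (cf.~\cite{kauffman2013affine,cheng2013polynomial}); and the identity $\operatorname{Ind}(c)=\operatorname{sgn}(c)(a-b-1)$ from \eqref{eq2.2}.

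The virtual and detour moves are disposed of first. They create no classical crossings and leave $\operatorname{sgn}(c)$ and $\operatorname{Ind}(c)$ unchanged for every classical crossing $c$; moreover performing such a move and then smoothing a classical crossing $c$ against orientation agrees with smoothing first and then performing the move, so $D_{c}$ changes only by a virtual/detour move. Hence each $\nabla J_{n}(D_{c})$, the value $\nabla J_{n}(D)$, and the set $T_{n}(D)$ are unchanged, and so is $F^{n}_{D}$. For the classical moves I compare $F^{n}_{D}$ with $F^{n}_{D'}$ termwise, $D'$ being obtained from $D$ by one R1, R2 or R3 move. For a crossing $c$ not involved in the move (a spectator), $\operatorname{sgn}(c)$ and $\operatorname{Ind}(c)$ are preserved by affine index polynomial theory, and $D_{c}$ and $D'_{c}$ differ by the very same R1/R2/R3 move performed away from $c$ (for R3 one notes that smoothing a spectator leaves the three-strand configuration intact); hence $\nabla J_{n}(D_{c})=\nabla J_{n}(D'_{c})$, and since also $\nabla J_{n}(D)=\nabla J_{n}(D')$ we get $c\in T_{n}(D)\iff c\in T_{n}(D')$, so the spectator terms of $F^{n}_{D}$ and $F^{n}_{D'}$ match.

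It remains to account for the crossings created by the move. For an R1 kink $c_{0}$ one has $\operatorname{Ind}(c_{0})=0$ by \eqref{eq2.2}, and $D_{c_{0}}$ is planar-isotopic to $D'$ (the smoothed kink is a crossing-free wiggle), so $\nabla J_{n}(D_{c_{0}})=\nabla J_{n}(D')=\nabla J_{n}(D)$; thus $c_{0}\in T_{n}(D)$ and the first-sum term $\operatorname{sgn}(c_{0})t^{0}\ell^{\nabla J_{n}(D_{c_{0}})}$ is exactly cancelled by the term $-\operatorname{sgn}(c_{0})\ell^{\nabla J_{n}(D_{c_{0}})}$ coming from $c_{0}\in T_{n}(D)$ (here $\operatorname{Ind}(c_{0})=0$ is essential). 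For an R2 move creating $c_{1},c_{2}$ one has $\operatorname{sgn}(c_{1})=-\operatorname{sgn}(c_{2})$ and $\operatorname{Ind}(c_{1})=\operatorname{Ind}(c_{2})=:m$ (affine index polynomial theory), and the crucial claim is $\nabla J_{n}(D_{c_{1}})=\nabla J_{n}(D_{c_{2}})=:v$; granting it, the contribution of $c_{1}$ and $c_{2}$ to $F^{n}_{D}$ is $(\operatorname{sgn}(c_{1})+\operatorname{sgn}(c_{2}))t^{m}\ell^{v}=0$ from the first sum, and since $c_{1}\in T_{n}(D)\iff c_{2}\in T_{n}(D)$ their contribution to the remaining two sums is $-(\operatorname{sgn}(c_{1})+\operatorname{sgn}(c_{2}))\ell^{e}=0$ as well, where $e=v$ or $e=\nabla J_{n}(D)$ according to membership in $T_{n}(D)$. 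For an R3 move the three crossings are in natural bijection with signs and index values preserved, and smoothing any one of them reduces the R3 move to an R2 move (or a planar isotopy) on $D_{c}$ versus $D'_{c}$, so $\nabla J_{n}(D_{c})=\nabla J_{n}(D'_{c})$ and, as above, all terms match. Collecting the cases gives $F^{n}_{D}=F^{n}_{D'}$ under every move, proving the theorem.

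The main obstacle is the R2 identity $\nabla J_{n}(D_{c_{1}})=\nabla J_{n}(D_{c_{2}})$. I would prove it by a local analysis of the against-orientation smoothing applied to one crossing of a cancelling bigon: first determine which of the two planar resolutions is the disoriented one (this depends on whether the bigon is parallel or antiparallel), then check that smoothing $c_{i}$ turns the other crossing into a kink, and that removing that kink by an R1 move yields, for both $i=1$ and $i=2$, the same diagram (the bigon region replaced by two disjoint crossing-free turnbacks). The delicate point is orientations: one must verify that the orientation $D_{c_{i}}$ inherits from the smoothing induces the same orientation on this common diagram for $i=1$ and $i=2$, so that the two dwrithes coincide rather than differing in sign via Lemma~\ref{lemma1}. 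The R1 step (that $D_{c_{0}}$ is planar-isotopic to $D'$) and the R3 step (that smoothing reduces R3 to R2 or isotopy) are of the same flavour but easier.
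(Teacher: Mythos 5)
The paper offers no proof of Theorem \ref{fth} at all---the statement is imported verbatim from \cite{KPV}---so there is nothing in-paper to compare against; your move-by-move Reidemeister verification is exactly the strategy of the cited source and the outline is sound. You also correctly isolate the one genuinely delicate point, namely that the two against-orientation smoothings of an R2 bigon must give the \emph{same} oriented virtual knot (so the dwrithes agree on the nose rather than differing in sign via Lemma \ref{lemma1}), which is indeed where the real work lies in \cite{KPV}.
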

 
\smallskip  

\section{$F$-polynomials for Green's table and for Kauffman's infinite family}

Tabulation of prime classical knot diagrams  in order of  increasing of crossing numbers was started in 19-th century. Nowdays tables of classical knot diagrams as well as their invariants are widely presented in the literature. 

Unfortunately, there are few results related to tabulation of diagrams of virtual knots. We point out that virtual knots up to four crossings were tabulated by Jeremy Green under the supervision of Dror Bar-Natan~\cite{Green} (see also Appendix A in the book~\cite{dye}). For the reader's convenience we present these diagrams in Tables~\ref{table-1}, \ref{table-2}, and \ref{table-3}, the knots are denoted from \textbf{2.1} to \textbf{4.108}. 

We computed $F^{n}$-polynomials for these knots and splitted up knots into groups with the same polynomials. For each knot $K$ polynomials $F^{n}$ are presented up to such  $n$ that $F^{n}_{K} (t,\ell)$ reduces to Affine Index Polynomial $P_{K}(t)$. 

\smallskip

\begin{theorem} \label{theorem3-1}
$F$-polynomials of virtual knots with diagrams from  \textbf{2.1} to \textbf{4.108} are presented in Tables~\ref{table-4}--\ref{table-8}. 
\end{theorem}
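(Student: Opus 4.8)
\medskip

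\noindent\textbf{Proof strategy.}
The plan is to verify the entries of Tables~\ref{table-4}--\ref{table-8} by running Definition~\ref{f-pol} as an explicit algorithm on each diagram of Tables~\ref{table-1}--\ref{table-3}. For a fixed knot $K$ among \textbf{2.1}, \ldots, \textbf{4.108} I would first record its tabulated diagram $D$, with the orientation indicated in the table, as an oriented signed Gauss datum: the cyclic word of classical crossings met while traversing $D$, together with over/under markers and the sign of Fig.~\ref{fig102} at each crossing. From this datum I would compute a Cheng coloring via $\lambda(\alpha)=\sum_{c\in O(\alpha)}\operatorname{sgn}(c)$ as recalled in Section~\ref{section-2}, and then apply~\eqref{eq2.2} to read off $\operatorname{Ind}(c)$ for every classical crossing $c$. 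This already determines the writhes $J_n(D)$, the dwrithes $\nabla J_n(D)=J_n(D)-J_{-n}(D)$, and the affine index polynomial $P_D(t)$.

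Next, for each $c\in C(D)$ I would form the against-orientation smoothing $D_c$ of Fig.~\ref{fig105}; combinatorially this deletes $c$ from the Gauss datum and reconnects the four incident strand-ends in the orientation-compatible way, yielding an oriented virtual knot diagram with one fewer classical crossing. Repeating the previous step for each $D_c$ gives the numbers $\nabla J_n(D_c)$, hence the set $T_n(D)=\{c\in C(D):|\nabla J_n(D_c)|=|\nabla J_n(D)|\}$, and then $F^n_D(t,\ell)$ is assembled exactly as in Definition~\ref{f-pol}. Throughout I would cross-check the intermediate data against the worked Example~\ref{ex2.5} and against Lemma~\ref{lemma1}, which forces every dwrithe exponent to change sign under reversal of orientation.

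It remains to bound the number of nontrivial $F^n$ per knot. By Remark~\ref{rem2.2}, $\nabla J_n(D)=0$ whenever $n\notin S(D)$, and the same holds with $D$ replaced by any $D_c$; since every diagram here has at most four classical crossings, all absolute index values are bounded by a small explicit constant, so there is an $n_0=n_0(D)$ beyond which $T_n(D)=C(D)$ and $F^n_D(t,\ell)=P_D(t)$. Thus the tables need only list the finitely many $F^n$ that differ from the affine index polynomial, and Theorem~\ref{fth} guarantees that these values do not depend on the chosen diagram, so they are genuine invariants of $K$.

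I expect the main difficulty to be bookkeeping and sheer volume rather than anything conceptual: on the order of one hundred diagrams must be processed, each requiring a Cheng coloring not only of $D$ but of up to four smoothed diagrams $D_c$, and an against-orientation smoothing can reconnect strands in a way that is easy to mis-draw. To keep errors under control I would check every computation against the coefficients of $P_D(t)$ (which recover the $J_n(D)$), against the symmetry of Lemma~\ref{lemma1} applied to the mirror and the reverse, and, whenever a knot admits a second diagram in the list or an evidently simpler diagram, by recomputing from that alternative and invoking Theorem~\ref{fth}.
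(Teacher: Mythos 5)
Your proposal matches the paper's proof, which is a one-line appeal to computer calculations following the algorithm derived directly from Definition~\ref{f-pol}; you have simply spelled out that algorithm (Cheng coloring via $\lambda$, index values via~\eqref{eq2.2}, against-orientation smoothings, dwrithes, and the stabilization $F^n_D=P_D$ for large $n$ via Remark~\ref{rem2.2}) in more detail. The approach is the same and the extra consistency checks you describe are sensible but not a different method.
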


\begin{proof} 
The result is obtained by  computer calculations which follow the algorithm of finding $F$-polynomials based on the definition. 
\end{proof}

\smallskip

Observe that since  \textbf{3.6} is the classical trefoil and \textbf{4.108} is the classical figure-eight knot, their $F$-polynomials are trivial.

 \smallskip 
 
 We recall that in~\cite[Fig.~27]{kauffman2018cobordism} Kauffman presented an infinite family of virtual knots with the same Affine Index Polynomial. He mentioned that all of them are labeled cobordant to the Hopf link diagram and they can be distinguished from one another by the bracket polynomial. 
 
 \begin{prop} \label{prop3.2} 
 There exists an infinite family of oriented virtual knots with the same $F$-polynomials. 
 \end{prop}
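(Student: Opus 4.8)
The plan is to show that Kauffman's infinite family $\{K_m\}_{m\in\mathbb{N}}$ from \cite[Fig.~27]{kauffman2018cobordism}, which already shares a common affine index polynomial, in fact has a common value of $F^n$ for every $n$. Represent $K_m$ by a diagram $D_m$ obtained from a fixed ``core'' diagram by inserting into one arc an $m$-fold repetition of a fixed local tangle $\tau$ (classical and virtual crossings). The reason the affine index polynomial is constant along the family is, concretely, that in any Cheng coloring of $D_m$ the two strands running through the inserted copies of $\tau$ carry the same label; consequently every classical crossing located inside those copies has index value $0$. I would begin by recording this explicitly, together with the list of signs $\operatorname{sgn}(c)$ and index values $\operatorname{Ind}(c)$ of all classical crossings of $D_m$: the core crossings contribute a fixed multiset of pairs $(\operatorname{sgn},\operatorname{Ind})$ independent of $m$, and each tangle crossing contributes $(\pm 1, 0)$.

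Next I would compute the $n$-dwrithes. Because a crossing of index value $0$ is not counted by $J_n$ for any $n\neq 0$, we get $J_n(D_m)=J_n(D_1)$ and hence $\nabla J_n(D_m)=\nabla J_n(D_1)$ for every $n\geq 1$; by Remark~\ref{rem2.2} this common value vanishes once $n$ exceeds a fixed $n_0$. The main work is then to compute $\nabla J_n\big((D_m)_c\big)$ for each classical crossing $c$ of $D_m$ and to check that it does not depend on $m$. If $c$ is a core crossing, the against-orientation smoothing at $c$ leaves the inserted copies of $\tau$ untouched and still makes the two through-strands equally labelled, so all tangle crossings of $(D_m)_c$ again have index $0$, whence $\nabla J_n\big((D_m)_c\big)=\nabla J_n\big((D_1)_c\big)$. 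If $c$ lies in the $j$-th copy of $\tau$, I would show that the against-orientation smoothing at $c$ either splits off a virtually trivial summand or deletes one copy of $\tau$ (up to Reidemeister moves that keep the remaining tangle crossings at index $0$), so that $\nabla J_n\big((D_m)_c\big)$ depends only on the position of $c$ within a single copy of $\tau$, not on $m$.

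With these facts in hand, the multiset of triples $\big(\operatorname{sgn}(c),\operatorname{Ind}(c),\nabla J_n((D_m)_c)\big)$ over $c\in C(D_m)$ and the integer $\nabla J_n(D_m)$ are independent of $m$; hence so is the set $T_n(D_m)=\{c : |\nabla J_n((D_m)_c)| = |\nabla J_n(D_m)|\}$, and therefore, by Definition~\ref{f-pol}, the polynomial $F^n_{D_m}(t,\ell)$ is independent of $m$ for every $n$. By Theorem~\ref{fth} this equals $F^n_{K_m}(t,\ell)$. Finally, the $K_m$ are pairwise inequivalent: as noted in \cite[Fig.~27]{kauffman2018cobordism} they are distinguished from one another by the bracket polynomial. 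Thus $\{K_m\}_{m\in\mathbb{N}}$ is an infinite family of oriented virtual knots with the same $F$-polynomials.

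The step I expect to be the main obstacle is the uniform-in-$m$ computation of $\nabla J_n\big((D_m)_c\big)$: a priori, smoothing a core crossing might merge strands so that some crossing of $\tau$ acquires a nonzero index, and smoothing a crossing inside $\tau$ need not obviously return a diagram of the form $D_{m-1}$. I would handle this at the level of Gauss diagrams, using the formula $\lambda(\alpha)=\sum_{c\in O(\alpha)}\operatorname{sgn}(c)$: it suffices to verify that in each smoothed diagram the two through-strands of the surviving copies of $\tau$ still meet exactly the same overcrossings, so that the ``index $0$'' property persists and every dwrithe computation collapses to a single finite check.
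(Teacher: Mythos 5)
Your proposal is correct and follows essentially the same route as the paper: both take Kauffman's family from \cite[Fig.~27]{kauffman2018cobordism}, observe that the inserted crossings all have index value $0$ so that the signs, index values, and dwrithes of the smoothed diagrams are independent of the number of inserted crossings, and conclude via Definition~\ref{f-pol} that all $F^n$-polynomials coincide, with distinctness of the knots delegated to Kauffman's bracket-polynomial remark. The only difference is one of presentation: the paper computes the base case $D^1$ explicitly and asserts that the calculation for general odd $k$ is identical, whereas you argue the uniformity in $m$ structurally (via the persistence of the index-$0$ property under smoothing), which is precisely the step the paper leaves as ``it is easy to check.''
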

 
 \begin{proof} We will use the same family of virtual knots as Kauffman used  in \cite{kauffman2018cobordism} for the Affine Index Polynomial. 
 Consider oriented virtual knot diagram $D^{1}$ with one virtual crossings and three classical crossings $\alpha_{1}$, $\beta$ and $\gamma$ as presented in Fig.~\ref{fig7}. Applying smoothings against orientation in classical crossings, we will obtain three oriented virtual knot diagrams $D^{1}_{\alpha_{1}}$, $D^{1}_{\beta}$ and $D^{1}_{\gamma}$ pictured in Fig.~\ref{fig7}. 
 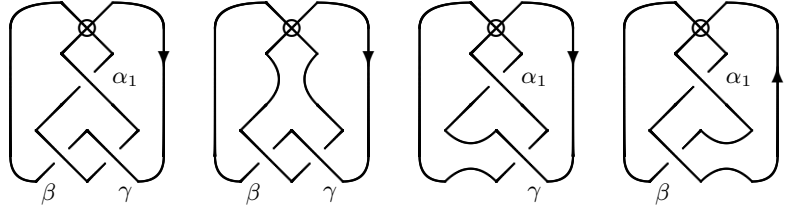
\begin{figure}[!ht]
\centering 
\unitlength=0.34mm
\begin{picture}(0,90)(0,10)
\put(-120,0){\begin{picture}(0,80)
\thicklines
\put(30,60){\vector(0,-1){5}}
\qbezier(-10,60)(-10,60)(10,80)
\qbezier(-10,80)(-10,80)(10,60) 
\put(0,70){\circle{6}}
\qbezier(-10,60)(-10,60)(10,40)
\qbezier(-10,40)(-10,40)(-3,47) 
\qbezier(10,60)(10,60)(3,53) 
\qbezier(-10,40)(-10,40)(-20,30)
\qbezier(10,40)(10,40)(20,30)
\qbezier(-20,30)(-20,30)(0,10)
\qbezier(-20,10)(-20,10)(-13,17) 
\qbezier(0,30)(0,30)(-7,23)
\qbezier(0,30)(0,30)(20,10)
\qbezier(0,10)(0,10)(7,17)
\qbezier(20,30)(20,30)(13,23)
\qbezier(20,10)(30,10)(30,20)
\qbezier(30,20)(30,20)(30,70)
\qbezier(30,70)(30,80)(10,80)
\qbezier(-20,10)(-30,10)(-30,20)
\qbezier(-30,20)(-30,20)(-30,70)
\qbezier(-30,70)(-30,80)(-10,80)
\put(15,50){\makebox(0,0)[cc]{\footnotesize $\alpha_{1}$}}
\put(-15,5){\makebox(0,0)[cc]{\footnotesize $\beta$}}
\put(15,5){\makebox(0,0)[cc]{\footnotesize $\gamma$}}
\end{picture}} 
%%%
\put(-40,0){\begin{picture}(0,80)
\thicklines
\put(30,60){\vector(0,-1){5}}
\qbezier(-10,60)(-10,60)(10,80)
\qbezier(-10,80)(-10,80)(10,60) 
\put(0,70){\circle{6}}
%\qbezier(-10,60)(-10,60)(10,40)
%\qbezier(-10,40)(-10,40)(-3,47) 
%\qbezier(10,60)(10,60)(3,53) 
\qbezier(-10,40)(0,50)(-10,60)
\qbezier(10,40)(0,50)(10,60)
\qbezier(-10,40)(-10,40)(-20,30)
\qbezier(10,40)(10,40)(20,30)
\qbezier(-20,30)(-20,30)(0,10)
\qbezier(-20,10)(-20,10)(-13,17) 
\qbezier(0,30)(0,30)(-7,23)
\qbezier(0,30)(0,30)(20,10)
\qbezier(0,10)(0,10)(7,17)
\qbezier(20,30)(20,30)(13,23)
\qbezier(20,10)(30,10)(30,20)
\qbezier(30,20)(30,20)(30,70)
\qbezier(30,70)(30,80)(10,80)
\qbezier(-20,10)(-30,10)(-30,20)
\qbezier(-30,20)(-30,20)(-30,70)
\qbezier(-30,70)(-30,80)(-10,80)
%
%\put(15,50){\makebox(0,0)[cc]{\footnotesize $\alpha_{1}$}}
\put(-15,5){\makebox(0,0)[cc]{\footnotesize $\beta$}}
\put(15,5){\makebox(0,0)[cc]{\footnotesize $\gamma$}}
\end{picture}} 
%%%
\put(40,0){\begin{picture}(0,80)
\thicklines
\put(30,60){\vector(0,-1){5}}
\qbezier(-10,60)(-10,60)(10,80)
\qbezier(-10,80)(-10,80)(10,60) 
\put(0,70){\circle{6}}
\qbezier(-10,60)(-10,60)(10,40)
\qbezier(-10,40)(-10,40)(-3,47) 
\qbezier(10,60)(10,60)(3,53) 
\qbezier(-10,40)(-10,40)(-20,30)
\qbezier(10,40)(10,40)(20,30)
%\qbezier(-20,30)(-20,30)(0,10)
%\qbezier(-20,10)(-20,10)(-13,17) 
%\qbezier(0,30)(0,30)(-7,23)
\qbezier(-20,30)(-10,20)(0,30)
\qbezier(-20,10)(-10,20)(0,10)
\qbezier(0,30)(0,30)(20,10)
\qbezier(0,10)(0,10)(7,17)
\qbezier(20,30)(20,30)(13,23)
\qbezier(20,10)(30,10)(30,20)
\qbezier(30,20)(30,20)(30,70)
\qbezier(30,70)(30,80)(10,80)
\qbezier(-20,10)(-30,10)(-30,20)
\qbezier(-30,20)(-30,20)(-30,70)
\qbezier(-30,70)(-30,80)(-10,80)
\put(15,50){\makebox(0,0)[cc]{\footnotesize $\alpha_{1}$}}
%\put(-15,5){\makebox(0,0)[cc]{\footnotesize $\beta$}}
\put(15,5){\makebox(0,0)[cc]{\footnotesize $\gamma$}}
\end{picture}} 
%%%
\put(120,0){\begin{picture}(0,80)
\thicklines
\put(30,50){\vector(0,1){5}}
\qbezier(-10,60)(-10,60)(10,80)
\qbezier(-10,80)(-10,80)(10,60) 
\put(0,70){\circle{6}}
\qbezier(-10,60)(-10,60)(10,40)
\qbezier(-10,40)(-10,40)(-3,47) 
\qbezier(10,60)(10,60)(3,53) 
\qbezier(-10,40)(-10,40)(-20,30)
\qbezier(10,40)(10,40)(20,30)
\qbezier(-20,30)(-20,30)(0,10)
\qbezier(-20,10)(-20,10)(-13,17) 
\qbezier(0,30)(0,30)(-7,23)
%\qbezier(0,30)(0,30)(20,10)
%\qbezier(0,10)(0,10)(7,17)
%\qbezier(20,30)(20,30)(13,23)
\qbezier(0,30)(10,20)(20,30)
\qbezier(0,10)(10,20)(20,10)
\qbezier(20,10)(30,10)(30,20)
\qbezier(30,20)(30,20)(30,70)
\qbezier(30,70)(30,80)(10,80)
\qbezier(-20,10)(-30,10)(-30,20)
\qbezier(-30,20)(-30,20)(-30,70)
\qbezier(-30,70)(-30,80)(-10,80)
\put(15,50){\makebox(0,0)[cc]{\footnotesize $\alpha_{1}$}}
\put(-15,5){\makebox(0,0)[cc]{\footnotesize $\beta$}}
%\put(15,5){\makebox(0,0)[cc]{\footnotesize $\gamma$}}
\end{picture}} 
%%%
\end{picture}
\caption{Oriented virtual diagrams $D^{1}$, $D^{1}_{\alpha_{1}}$, $D^{1}_{\beta}$, and $D^{1}_{\gamma}$.} \label{fig7}
\end{figure}
It is easy to calculate the values presented in Table~\ref{t100}.  
\begin{table}[ht]
\caption{Values of $\operatorname{sgn}$, $\operatorname{Ind}$, and dwrithe for diagrams from Fig.~\ref{fig7}.}
\begin{tabular}{l l l l } \hline
 & Sign & index value & dwrithe \\ \hline
 $D^{1}$ & $\operatorname{sgn}(\alpha_{1}) = 1$ & $\operatorname{Ind}(\alpha_{1})=0$ & $\nabla J_{1}(D^{1}) =  0$ \\ 
& $\operatorname{sgn}(\beta)=-1$ &  $\operatorname{Ind}(\beta)=1$& \\  
& $\operatorname{sgn}(\gamma)=-1$ &  $\operatorname{Ind}(\gamma)=-1$& \\  \hline
$D^{1}_{\alpha_{1}}$ & $\operatorname{sgn}(\beta) = 1$ & $\operatorname{Ind}(\beta)=1$ & $\nabla J_{1}(D^{1}_{\alpha_{1}}) =  0$ \\ 
& $\operatorname{sgn}(\gamma)=1$ &  $\operatorname{Ind}(\gamma)=-1$& \\  \hline
$D^{1}_{\beta}$ & $\operatorname{sgn}(\alpha_{1})=-1$ & $\operatorname{Ind}(\alpha_{1})=-1$ & $\nabla J_{1}(D^{1}_{\beta})=0$ \\ 
& $\operatorname{sgn}(\gamma)=-1$ & $\operatorname{Ind}(\gamma)=1$ &   \\ \hline
$D^{1}_{\gamma}$ & $\operatorname{sgn}(\alpha_{1})=1$ & $\operatorname{Ind}(\alpha_{1})= 1$ &  $\nabla J_{1}(D^{1}_{\gamma})= 0$ \\ 
& $\operatorname{sgn}(\beta)=-1$ & $\operatorname{Ind}(\beta)= -1$ &  \\ \hline
\end{tabular} \label{t100}
\end{table} 
 Therefore $T_{1} = \{ \alpha_{1}, \beta, \gamma \}$ and $F^{1}_{D^{1}}(t,\ell) = P_{D^{1}} (t)= -t + 2 -t^{-1}$. 
 
 Similar to~\cite[Fig.~27]{kauffman2018cobordism} diagram $D^{1}$ can be naturally generalized to diagram $D^{k}$ with an odd number $k$ of classical crossings as shown in Fig.~\ref{fig8} for $k=3$. 
  \begin{figure}[!ht]
\centering 
\unitlength=0.34mm
\begin{picture}(0,130)(0,10)
\put(0,0){\begin{picture}(0,120)
\thicklines
\put(30,60){\vector(0,-1){5}}
\qbezier(-10,100)(-10,100)(10,120)
\qbezier(-10,120)(-10,120)(10,100) 
\put(0,110){\circle{6}}
\qbezier(-10,100)(-10,100)(10,80)
\qbezier(-10,80)(-10,80)(-3,87) 
\qbezier(10,100)(10,100)(3,93) 
\qbezier(-10,80)(-10,80)(10,60)
\qbezier(-10,60)(-10,60)(-3,67) 
\qbezier(10,80)(10,80)(3,73) 
\qbezier(-10,60)(-10,60)(10,40)
\qbezier(-10,40)(-10,40)(-3,47) 
\qbezier(10,60)(10,60)(3,53) 
\qbezier(-10,40)(-10,40)(-20,30)
\qbezier(10,40)(10,40)(20,30)
\qbezier(-20,30)(-20,30)(0,10)
\qbezier(-20,10)(-20,10)(-13,17) 
\qbezier(0,30)(0,30)(-7,23)
\qbezier(0,30)(0,30)(20,10)
\qbezier(0,10)(0,10)(7,17)
\qbezier(20,30)(20,30)(13,23)
\qbezier(20,10)(30,10)(30,20)
\qbezier(30,20)(30,20)(30,110)
\qbezier(30,110)(30,120)(10,120)
\qbezier(-20,10)(-30,10)(-30,20)
\qbezier(-30,20)(-30,20)(-30,110)
\qbezier(-30,110)(-30,120)(-10,120)
\put(15,90){\makebox(0,0)[cc]{\footnotesize $\alpha_{1}$}}
\put(15,70){\makebox(0,0)[cc]{\footnotesize $\alpha_{2}$}}
\put(15,50){\makebox(0,0)[cc]{\footnotesize $\alpha_{3}$}}
\put(-15,5){\makebox(0,0)[cc]{\footnotesize $\beta$}}
\put(15,5){\makebox(0,0)[cc]{\footnotesize $\gamma$}}
\end{picture}} 
%%%
\end{picture}
\caption{Oriented virtual diagram $D^{3}$.} \label{fig8}
\end{figure}
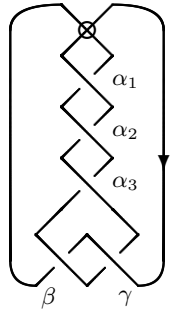
It is easy to check that all calculations for the diagram $D^{3}$ give the same results as for the diagram $D^{1}$ with the same behavior at crossings $\beta$ and crossings $\gamma$, respectively; and the same behavior at $\alpha_{1}, \alpha_{2}, \alpha_{3}$ of $D^{3}$ as at $\alpha_{1}$ of $D^{1}$. Similar arguments work for an arbitrary odd $k$. Since we get that for any odd $k \geq 1$ for the oriented virtual knot diagram $D^{k}$ we have $F^{1}_{D^{k}} (t, \ell) = P_{D^{k}} (t) = -t + 2 -t^{-1}$. 
 \end{proof}
 
 %%%%%

\begin{table}[ht]
\caption{Table of diagrams, part 1} \label{table-1}
\begin{tabular}{|c|c|c|c|c|c| } \hline
 \includegraphics[width=2.cm]{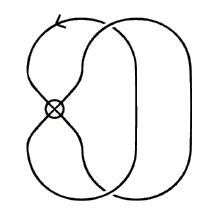} & 
 \includegraphics[width=2.cm]{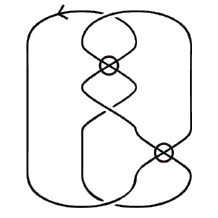} & 
\includegraphics[width=2.cm]{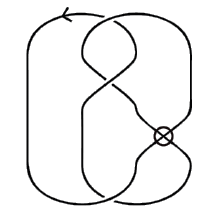} &
\includegraphics[width=2.cm]{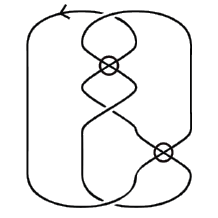} & 
\includegraphics[width=2.cm]{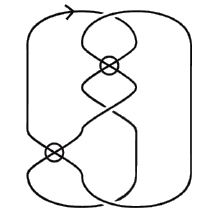} \\ 
2.1 & 3.1 & 3.2 & 3.3 & 3.4 \\ \hline
\includegraphics[width=2.cm]{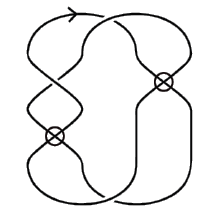} &
\includegraphics[width=2.cm]{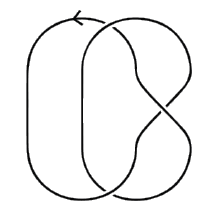} &
\includegraphics[width=2.cm]{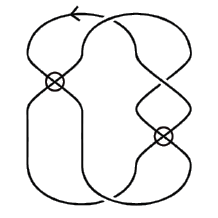} &
\includegraphics[width=2.cm]{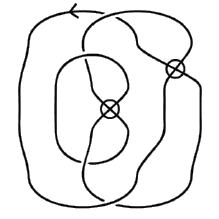} &
\includegraphics[width=2.cm]{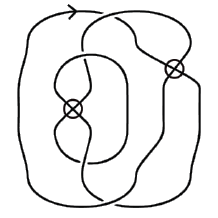} \\ 
3.5 & 3.6 & 3.7 & 4.1 & 4.2 \\ \hline
\includegraphics[width=2.cm]{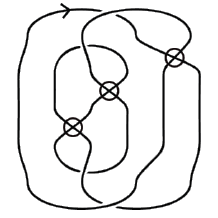} &
\includegraphics[width=2.cm]{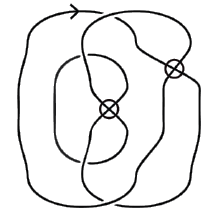} &
\includegraphics[width=2.cm]{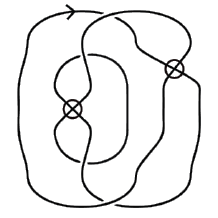} &
\includegraphics[width=2.cm]{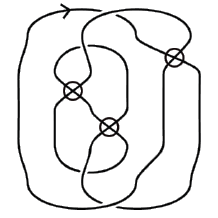} &
\includegraphics[width=2.cm]{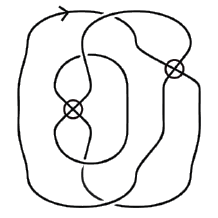} \\ 
4.3 & 4.4 & 4.5 & 4.6 & 4.7 \\ \hline
\includegraphics[width=2.cm]{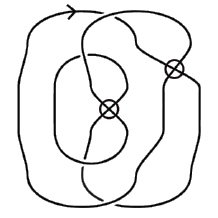} &
\includegraphics[width=2.cm]{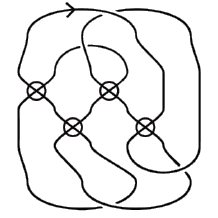} &
\includegraphics[width=2.cm]{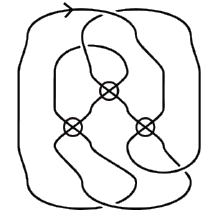} &
\includegraphics[width=2.cm]{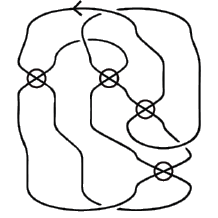} &
\includegraphics[width=2.cm]{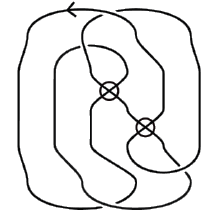} \\ 
4.8 & 4.9 & 4.10 & 4.11 & 4.12 \\ \hline
\includegraphics[width=2.cm]{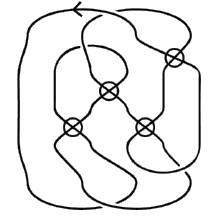} &
\includegraphics[width=2.cm]{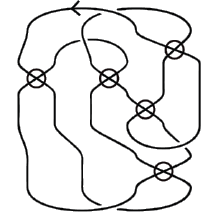} &
\includegraphics[width=2.cm]{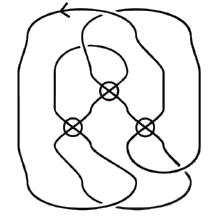} &
\includegraphics[width=2.cm]{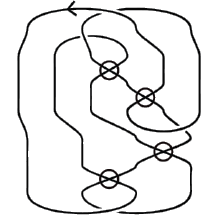} &
\includegraphics[width=2.cm]{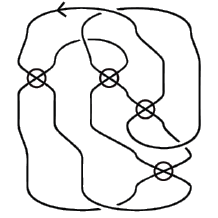} \\ 
4.13 & 4.14 & 4.15 & 4.16 & 4.17 \\ \hline
\includegraphics[width=2.cm]{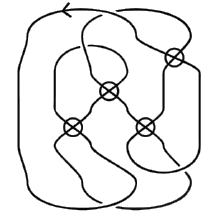} &
\includegraphics[width=2.cm]{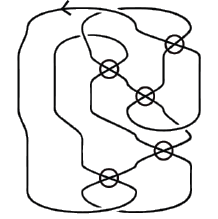} &
\includegraphics[width=2.cm]{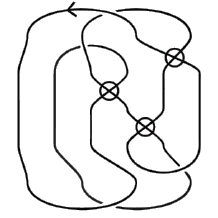} &
\includegraphics[width=2.cm]{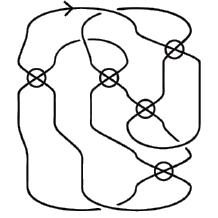} &
\includegraphics[width=2.cm]{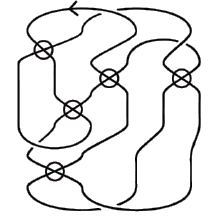} \\ 
4.18 & 4.19 & 4.20 & 4.21 & 4.22 \\ \hline
\includegraphics[width=2.cm]{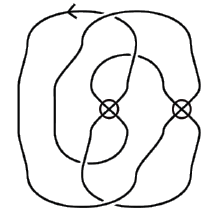} &
\includegraphics[width=2.cm]{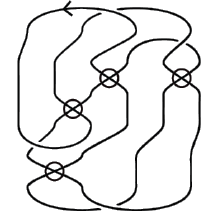} &
\includegraphics[width=2.cm]{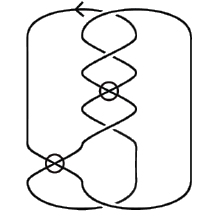} &
\includegraphics[width=2.cm]{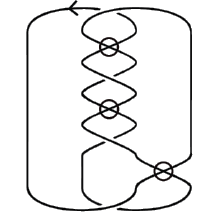} &
\includegraphics[width=2.cm]{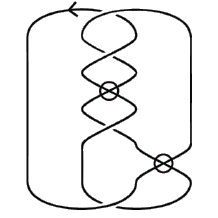} \\ 
4.23 & 4.24 & 4.25 & 4.26 & 4.27 \\ \hline 
\includegraphics[width=2.cm]{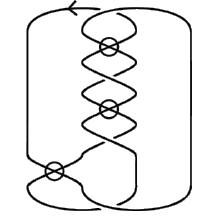} &
\includegraphics[width=2.cm]{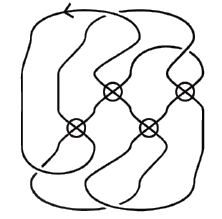} &
\includegraphics[width=2.cm]{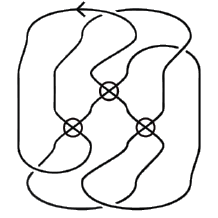} &
\includegraphics[width=2.cm]{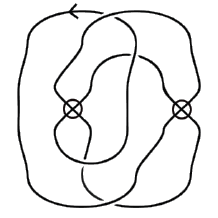} &
\includegraphics[width=2.cm]{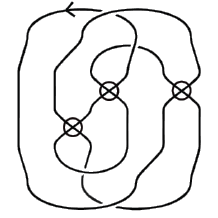} \\ 
4.28 & 4.29 & 4.30 & 4.31 & 4.32 \\ \hline 
\end{tabular}
\end{table}

\begin{table}[ht]
\caption{Table of diagrams, part 2} \label{table-2}
\begin{tabular}{|c|c|c|c|c| } \hline
\includegraphics[width=2.cm]{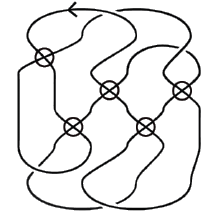} &
\includegraphics[width=2.cm]{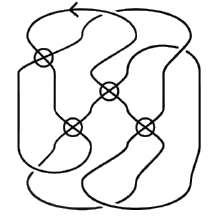} &
\includegraphics[width=2.cm]{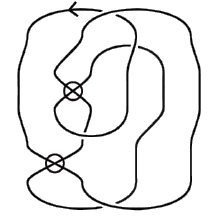} &
\includegraphics[width=2.cm]{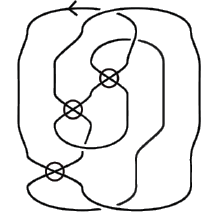} &
\includegraphics[width=2.cm]{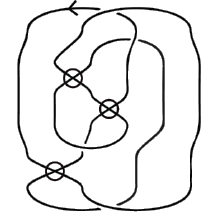} \\ 
  4.33 & 4.34 & 4.35 & 4.36 & 4.37 \\ \hline 
\includegraphics[width=2.cm]{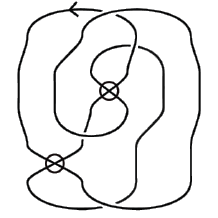} &
\includegraphics[width=2.cm]{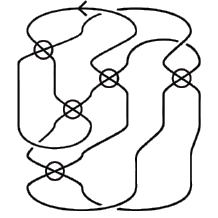} &
\includegraphics[width=2.cm]{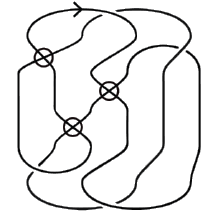} &
\includegraphics[width=2.cm]{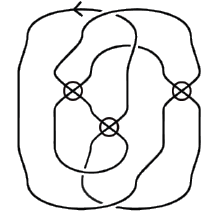} &
\includegraphics[width=2.cm]{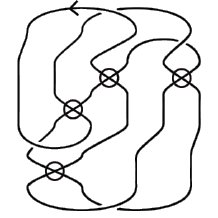} \\ 
  4.38 & 4.39 & 4.40 & 4.41 & 4.42 \\ \hline 
 \includegraphics[width=2.cm]{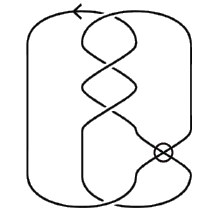} & 
 \includegraphics[width=2.cm]{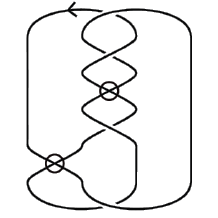} & 
\includegraphics[width=2.cm]{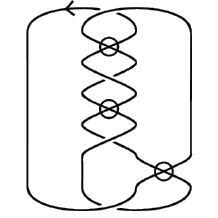} &
\includegraphics[width=2.cm]{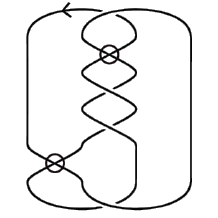} & 
\includegraphics[width=2.cm]{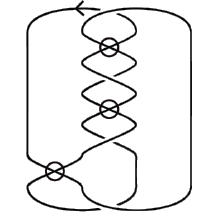} \\ 
  4.43 & 4.44 & 4.45 & 4.46 & 4.47 \\ \hline 
\includegraphics[width=2.cm]{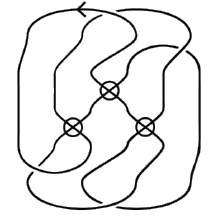} &
\includegraphics[width=2.cm]{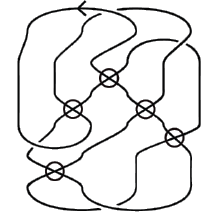} &
\includegraphics[width=2.cm]{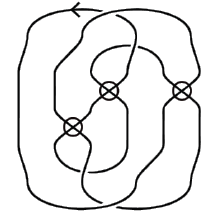} &
\includegraphics[width=2.cm]{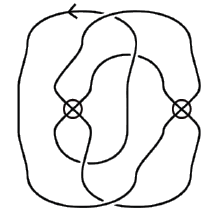} &
\includegraphics[width=2.cm]{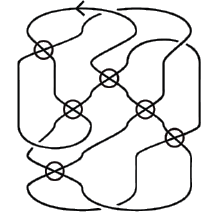} \\ 
  4.48 & 4.49 & 4.50 & 4.51 & 4.52 \\ \hline 
\includegraphics[width=2.cm]{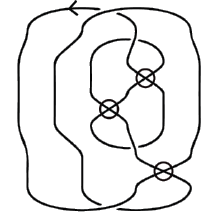} &
\includegraphics[width=2.cm]{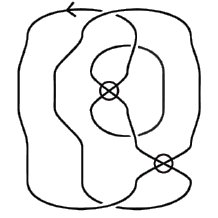} &
\includegraphics[width=2.cm]{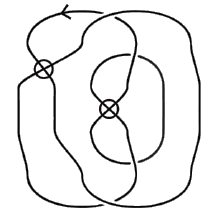} &
\includegraphics[width=2.cm]{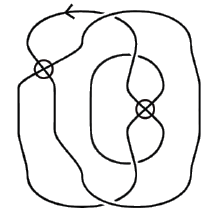} &
\includegraphics[width=2.cm]{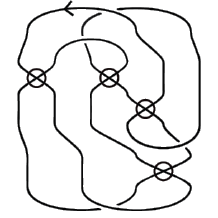} \\ 
  4.53 & 4.54 & 4.55 & 4.56 & 4.57 \\ \hline 
\includegraphics[width=2.cm]{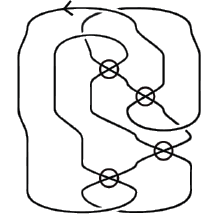} &
\includegraphics[width=2.cm]{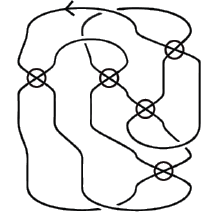} &
\includegraphics[width=2.cm]{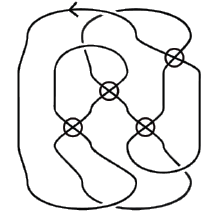} &
\includegraphics[width=2.cm]{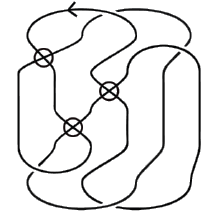} &
\includegraphics[width=2.cm]{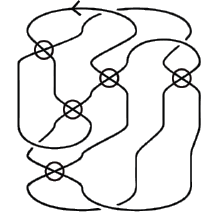} \\ 
  4.58 & 4.59 & 4.60 & 4.61 & 4.62 \\ \hline 
\includegraphics[width=2.cm]{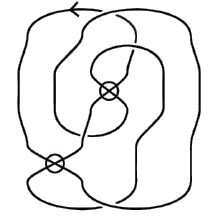} &
\includegraphics[width=2.cm]{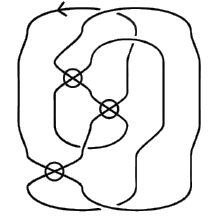} &
\includegraphics[width=2.cm]{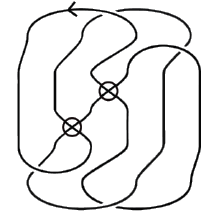} &
\includegraphics[width=2.cm]{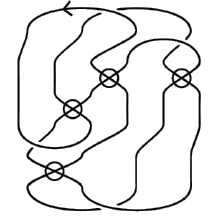} &
\includegraphics[width=2.cm]{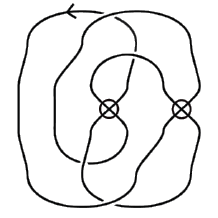} \\ 
  4.63 & 4.64 & 4.65 & 4.66 & 4.67 \\ \hline
\includegraphics[width=2.cm]{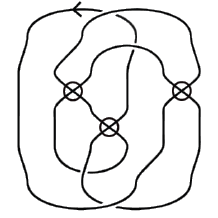} &
\includegraphics[width=2.cm]{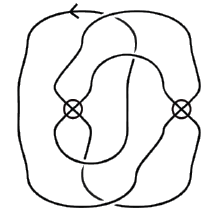} &
\includegraphics[width=2.cm]{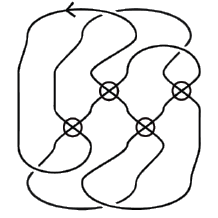} &
\includegraphics[width=2.cm]{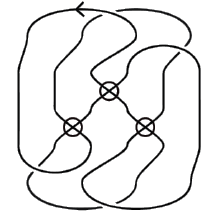} &
\includegraphics[width=2.cm]{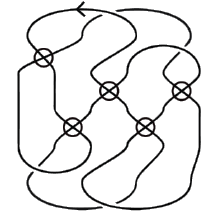} \\ 
 4.68 & 4.69 & 4.70 & 4.71 & 4.72 \\ \hline 
\end{tabular}
\end{table}

\begin{table}[ht]
\caption{Table of diagrams, part 3} \label{table-3}
\begin{tabular}{|c|c|c|c|c| } \hline
\includegraphics[width=2.cm]{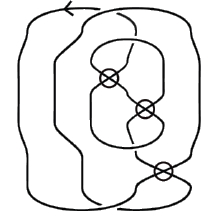} &
\includegraphics[width=2.cm]{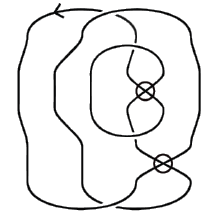} &
\includegraphics[width=2.cm]{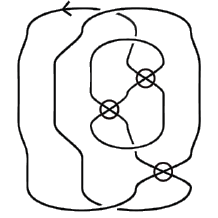} &
\includegraphics[width=2.cm]{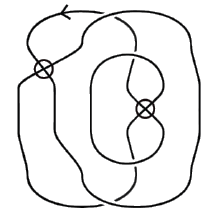} &
\includegraphics[width=2.cm]{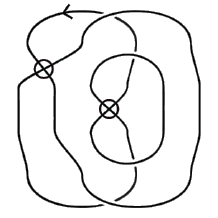} \\ 
  4.73 & 4.74 & 4.75 & 4.76 & 4.77 \\ \hline 
\includegraphics[width=2.cm]{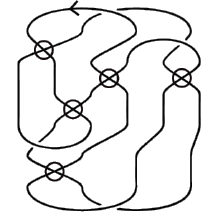} &
\includegraphics[width=2.cm]{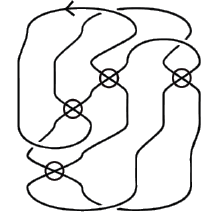} &
\includegraphics[width=2.cm]{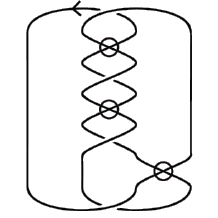} &
\includegraphics[width=2.cm]{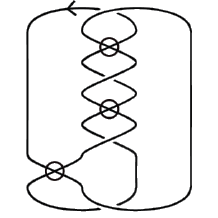} &
\includegraphics[width=2.cm]{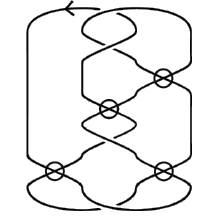} \\ 
  4.78 & 4.79 & 4.80 & 4.81 & 4.82 \\ \hline 
 \includegraphics[width=2.cm]{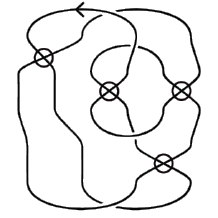} & 
 \includegraphics[width=2.cm]{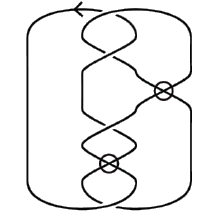} & 
\includegraphics[width=2.cm]{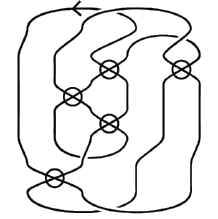} &
\includegraphics[width=2.cm]{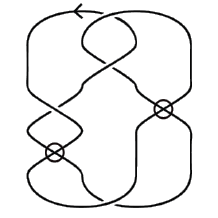} & 
\includegraphics[width=2.cm]{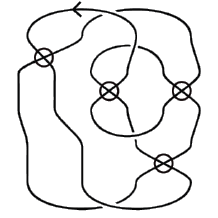} \\ 
  4.83 & 4.84 & 4.85 & 4.86 & 4.87 \\ \hline 
\includegraphics[width=2.cm]{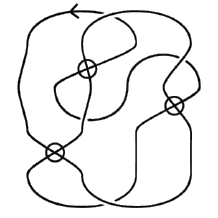} &
\includegraphics[width=2.cm]{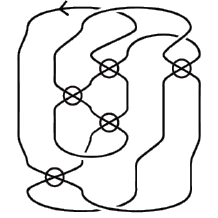} &
\includegraphics[width=2.cm]{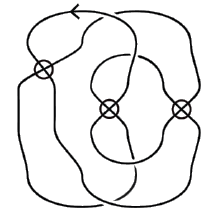} &
\includegraphics[width=2.cm]{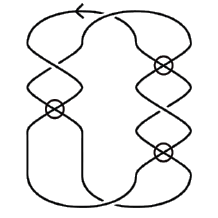} &
\includegraphics[width=2.cm]{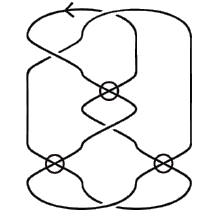} \\ 
  4.88 & 4.89 & 4.90 & 4.91 & 4.92 \\ \hline 
\includegraphics[width=2.cm]{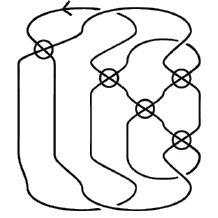} &
\includegraphics[width=2.cm]{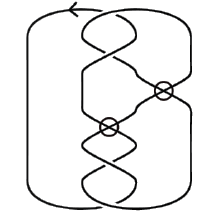} &
\includegraphics[width=2.cm]{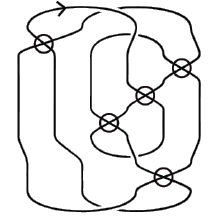} &
\includegraphics[width=2.cm]{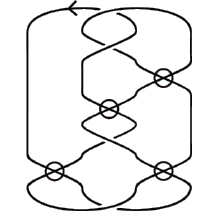} &
\includegraphics[width=2.cm]{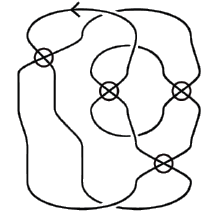} \\ 
  4.93 & 4.94 & 4.95 & 4.96 & 4.97 \\ \hline 
\includegraphics[width=2.cm]{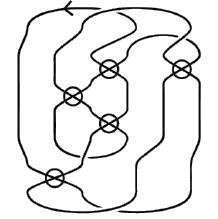} &
\includegraphics[width=2.cm]{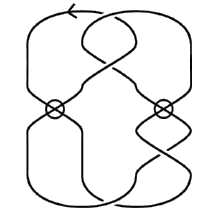} &
\includegraphics[width=2.cm]{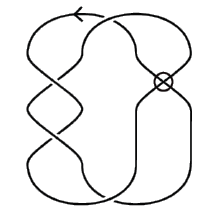} &
\includegraphics[width=2.cm]{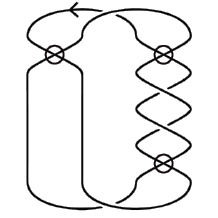} &
\includegraphics[width=2.cm]{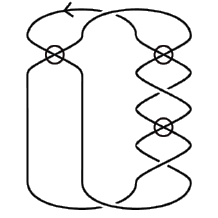} \\ 
  4.98 & 4.99 & 4.100 & 4.101 & 4.102 \\ \hline 
\includegraphics[width=2.cm]{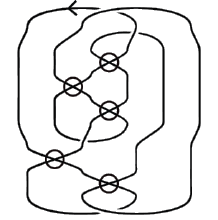} &
\includegraphics[width=2.cm]{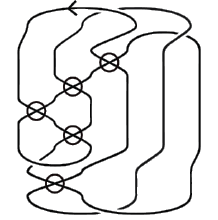} &
\includegraphics[width=2.cm]{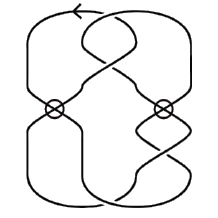} &
\includegraphics[width=2.cm]{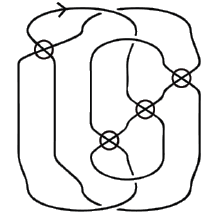} &
\includegraphics[width=2.cm]{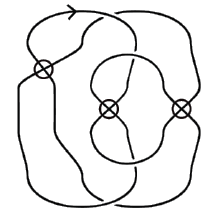} \\ 
  4.103 & 4.104 & 4.105 & 4.106 & 4.107 \\ \hline
\includegraphics[width=2.cm]{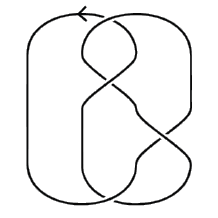} & & & & \\ 
 4.108 &  &  &  &  \\ \hline 
\end{tabular}
\end{table}

\begin{table}[ht]
\caption{Table of $F$-polynomials, part 1} \label{table-4} 
\begin{tabular}{|l|l|l|} \hline 
name of the virtual knot  & $n$ & $F^{n}(t, \ell)$-polynomials \\ \hline \hline 
\textbf{2.1}, \textbf{3.2}, \textbf{4.4}, \textbf{4.5}, \textbf{4.30}, \textbf{4.40},   &   1 & $ -t^{-1}+2-t$ \\ 
\textbf{4.54}, \textbf{4.61}, \textbf{4.69}, \textbf{4.74},  \textbf{4.94} & & \\  \hline 
\textbf{3.1} & 1 & $ -t^{-2}+t^{-1}+\ell^{-2}-t $ \\
& 2 & $  -t^{-2}+t^{-1}+\ell-t $ \\
& 3 & $  -t^{-2}+t^{-1}+1-t$  \\ \hline 
%\textbf{3.2} &  1 & $ -t^{-1}+2-t $\\ \hline 
\textbf{3.3}  & 1 & $ -t^{-2}+3\ell^{-2}-2t$ \\
& 2 & $ -t^{-2}+3\ell-2t$ \\
& 3 & $ -t^{-2}+3-2t$ \\\hline 
\textbf{3.4}  & 1 & $ \ell^{-2}-2t+t^{2} $ \\
& 2 & $ \ell-2t+t^{2}$\\
& 3 & $1-2t+t^{2}$\\ \hline 
\textbf{3.5}, \textbf{3.7}, \textbf{4.65}, \textbf{4.85}, \textbf{4.86}. \textbf{4.106} &  1 &  $ -t^{-2}+2-t^{2} $ \\ \hline
\textbf{3.6}, \textbf{4.2}, \textbf{4.6}, \textbf{4.8}, \textbf{4.12}, \textbf{4.41}, \textbf{4.55}, \textbf{4.56}, \textbf{4.58},
& 1 &  $0$ \\ 
 \textbf{4.59},  \textbf{4.68}, \textbf{4.71}, \textbf{4.72}, \textbf{4.75}, \textbf{4.76}, \textbf{4.77}, & & \\   
\textbf{4.90}, \textbf{4.98}, \textbf{4.99}, \textbf{4.105}, \textbf{4.107}. \textbf{4.108} & & \\ \hline 
%\textbf{3.7} & 1 &  $-t^{-2}+2-t^{2}$ \\ \hline 
\textbf{4.1}, \textbf{4.3}, \textbf{4.7}, \textbf{4.43}, \textbf{4.53}, \textbf{4.73}, \textbf{4.100}  & 1 &  $  -2t^{-1}+4-2t $ \\ \hline 
%\textbf{4.2} & 1 &  $0$ \\ \hline 
%\textbf{4.3} &1 & $  -2t^{-1}+4-2t $ \\ \hline 
%\textbf{4.4} & 1 & $ -t^{-1}+2-t$ \\ \hline 
%\textbf{4.5} & 1 & $-2t^{-1}+4-2t $ \\ \hline 
%\textbf{4.6} & 1 & $0$ \\ \hline 
%\textbf{4.7} & 1 & $ -2t^{-1}+4-2t$ \\ \hline 
%\textbf{4.8} & 1 & $0$ \\ \hline 
\textbf{4.9}, \textbf{4.27}, \textbf{4.33}, \textbf{4.44}  & 1& $  -t^{-1}\ell^2+2-t\ell^{-2} $\\
& 2& $ -t^{-1}\ell^{-1}+2-t\ell $ \\
& 3& $ -t^{-1}+2-t$ \\ \hline 
\textbf{4.10} & 1 & $ -t^{-1}-\ell^{-2}-1+3\ell^2+t\ell^{-2}-t^{2}$ \\
& 2& $ -t^{-1}+3\ell^{-1}-1-\ell+tl-t^{2}$ \\
& 3& $ -t^{-1}+1+t-t^{2}$ \\ \hline 
\textbf{4.11} & 1 & $ -t^{-2}\ell^2+2\ell^{-2}+\ell^2-t\ell^{-2}-t $ \\
& 2& $  -t^{-2}\ell^{-1}+\ell^{-1}+2\ell-t-t\ell$ \\
& 3& $ -t^{-2}+3-2t $\\ \hline 
%\textbf{4.12} & 1 & $0$ \\ \hline 
\textbf{4.13} & 1 & $ -t^{-1}\ell^{-2}+t^{-1}\ell^2 $ \\
& 2 & $ t^{-1}\ell^{-1}-t^{-1}\ell $\\
& 3& $ 0$ \\ \hline 
\textbf{4.14} & 1 & $ t^{-2}\ell^2-t^{-1}-t+t^{2} \ell^{-2} $ \\
& 2 & $ t^{-2} \ell^{-1}-t^{-1}-t+t^{2} \ell $\\
& 3 & $ t^{-2}-t^{-1}-t+t^{2} $ \\ \hline 
\textbf{4.15} & 1 & $ -t^{-2}+3 \ell^{-2}-1+\ell^2-t-t\ell^2 $\\
&2& $  -t^{-2}+\ell^{-1}-1+3\ell-t\ell^{-1}-t $ \\
&3 & $ -t^{-2}+3-2t $ \\ \hline 
\textbf{4.16} & 1 & $  -\ell^{-2} + 2 - \ell^{2} $ \\
& 2& $ -\ell^{-1} + 2 - \ell $ \\
& 3 & $  0 $ \\ \hline 
\textbf{4.17}, \textbf{4.57} & 1 & $ -t^{-2}\ell^2+t^{-1}+\ell^2-t\ell^{-2} $ \\
& 2 & $ -t^{-2}\ell^{-1}+t^{-1}+\ell^{-1}-t\ell $\\
& 3 & $ -t^{-2}+t^{-1}+1-t $ \\ \hline 
\textbf{4.18}, \textbf{4.60}  &  1 & $ -t^{-1}\ell^{-2}+2-t\ell^{-2}  $ \\
& 2 & $  -t^{-1}\ell+2-t\ell $ \\
& 3 & $  -t^{-1}+2-t $ \\ \hline 
\textbf{4.19} & 1 & $  -t^{-1}+\ell^2+t\ell^{-2}-t^{2}\ell^{-2} $\\
& 2 & $ -t^{-1}+\ell^{-1}+t\ell-t^{2}\ell $ \\
& 3 & $ -t^{-1}+1+t-t^{2} $ \\ \hline 
\end{tabular}
\end{table}

\begin{table}[ht]
\caption{Table of $F$-polynomials, part 2} \label{table-5}
\begin{tabular}{|l|l|l|} \hline
name of the virtual knot & $n$ & $F^{n}(t, \ell)$-polynomials \\ \hline \hline 
\textbf{4.20} &  1 & $  t^{-2}-t^{-1}-t^{-1}\ell^2+1 $ \\
& 2 & $ t^{-2}-t^{-1}\ell^{-1}-t^{-1}+1 $ \\
& 3 & $  t^{-2}-2t^{-1}+1 $ \\ \hline 
\textbf{4.21} & 1 & $ t^{-2}\ell^{-2}-2+t^{2}\ell^2 $ \\
& 2 & $ t^{-2}\ell-2+t^{2}\ell^{-1}$ \\
& 3 & $ t^{-2}-2+t^{2}$ \\ \hline 
\textbf{4.22} & 1 & $ -t^{-1}+1+t^{2}-t^{3}\ell^{-2} $ \\
& 2 & $ -t^{-1}+1+t^{2}-t^{3}\ell $ \\
& 3 & $  -t^{-1}+1+t^{2}-t^{3} $ \\ \hline 
\textbf{4.23} &  1 & $ -t^{-1}-1+2\ell^2+t\ell^2-t^{2}$ \\
& 2 & $ -t^{-1}+2\ell^{-1}-1+t\ell^{-1}-t^{2}$ \\
& 3 & $ -t^{-1}+1+t-t^{2} $ \\  \hline 
\textbf{4.24} & 1 & $ t^{-2}-2\ell^{-1}+1-t+t^{3}\ell^{-2} $ \\
& 2 & $ t^{-2}-\ell^{-1}+1-\ell-t+t^{3} \ell $ \\
& 3 & $  t^{-2}+1-2\ell-t+t^{3} $ \\
& 4 & $ t^{-2}-1-t+t^{3} $ \\  \hline 
\textbf{4.25} &  1 & $  -t^{-1}-t^{-1}\ell^2+4-t\ell^{-2}-t $ \\
& 2 & $  -t^{-1}\ell^{-1}-t^{-1}+4-t-t\ell $ \\
& 3 & $  -2t^{-1}+4-2t $ \\ \hline 
\textbf{4.26} & 1 &  $  -t^{-3}+t^{-1}+t^{-1}\ell^2-t\ell^2 $ \\
& 2 & $  -t^{-3}+t^{-1}\ell^{-1}+t^{-1}-t\ell^{-1} $ \\
& 3 & $  -t^{-3}+2t^{-1}-t $ \\ \hline 
%\textbf{4.27} &  1 & $ -t^{-1}\ell^2+2-t\ell^{-2} $ \\
%& 2 & $  -t^{-1}\ell^{-1}+2-t\ell $ \\
%& 3 & $  -t^{-1}+2-t $ \\ \hline 
\textbf{4.28} & 1 & $  t^{-3}-t^{-1}\ell^{-2}-2\ell^3+t\ell^{-2}+t $ \\
& 2 & $  t^{-3}-t^{-1}\ell-2+t+t\ell $ \\
& 3 & $  t^{-3}-t^{-1}-2l^{-1}+2t $ \\
& 4 & $  t^{-3}-t^{-1}-2+2t $ \\ \hline 
\textbf{4.29} & 1 & $ -t^{-1}\ell^{-2}-t^{-1}+\ell^{-2}+2\ell^2-t^{2}\ell^2 $ \\
& 2 & $ -t^{-1}-t^{-1}\ell+2l^{-1}+l-t^{2}\ell^{-1} $ \\
& 3 & $  -2t^{-1}+3-t^{2} $ \\ \hline 
%\textbf{4.30} & 1 & $  -t^{-1}+2-t $ \\ \hline 
\textbf{4.31}, \textbf{4.51} & 1 & $  t\ell^{-2}-t\ell^2 $ \\
& 2 & $  -t\ell^{-1}+t\ell $ \\
& 3 & $ 0 $ \\ \hline 
\textbf{4.32} &  1 & $  -t^{-2}+t^{-1}+1-t\ell^{-2} $ \\
& 2 & $ -t^{-2}+t^{-1}+1-t\ell $ \\
& 3 & $  -t^{-2}+t^{-1}+1-t $ \\
\hline 
%\textbf{4.33} & 1 &$  -t^{-1} \ell^2+2-t \ell^{-2} $ \\
%& 2 & $  -t^{-1} \ell^{-1}+2-t \ell $ \\
%& 3 & $  -t^{-1}+2-t $ \\ \hline 
\textbf{4.34} & 1 & $  \ell^{-2}-t-t\ell^2+t^{2}\ell^2 $ \\
& 2 & $  \ell-t\ell^{-1}-t+t^{2}\ell^{-1} $ \\
& 3 & $  1-2t+t^{2} $ \\ \hline 
\textbf{4.35} &  1 & $  -t^{-1}+1+t \ell^2-t^{2} $ \\
& 2 & $ -t^{-1}+1+t \ell^{-1}-t^{2} $ \\
& 3 & $  -t^{-1}+1+t-t^{2} $ \\ \hline 
\textbf{4.36} & 1 & $  t^{-2}-2+t^{2} $ \\ \hline 
\textbf{4.37} & 1 & $   -t^{-2}-t^{-1}+4-t-t^{2} $ \\ \hline 
\textbf{4.38} & 1 & $ \ell^{-2}-1+ \ell^2-t-t\ell^2+t^{2} $ \\
& 2 & $  \ell^{-1}-1+ \ell-t \ell^{-1}-t+t^{2} $ \\
& 3 & $ 1-2t+t^{2} $ \\ \hline 
\end{tabular}
\end{table} 

\begin{table}[ht]
\caption{Table of $F$-polynomials, part 3} \label{table-6}
\begin{tabular}{|l|l|l|} \hline
name of the virtual knot & $n$ & $F^{n}(t, \ell)$-polynomials \\ \hline \hline 
 \textbf{4.39} & 1 & $ -t^{3} \ell^{-2} -t^{-1} -1 +t^{2} + 2 \ell $ \\
& 2 & $ -t^{-1} -1 +t^{2} + 2 \ell - t^{3} \ell $ \\
& 3 & $ 2 \ell^{-1} - t^{-1} - 1 + t^{2} - t^{3}  $ \\ 
& 4 & $ -t^{-1} + 1 + t^{2} -  t^{3}  $ \\ \hline
%\textbf{4.40} & 1 & $  -t^{-1}+2-t $ \\ \hline 
\textbf{4.42} &  1 & $  1-t-t^{2}+t^{3} \ell^{-2} $ \\
& 2 & $  \ell^{-1}+1-\ell-t-t^{2}+t^{3} \ell $ \\
&3 & $  1-t-t^{2}+t^{3} $ \\ \hline 
 %\textbf{4.43} & 1 & $ -2t^{-1}+4-2t $ \\ \hline 
%\textbf{4.44} & 1 & $ -t^{-1}l^2+2-tl^{-2} $ \\
%& 2 & $  -t^{-1}l^{-1}+2-tl $ \\
%& 3 & $ -t^{-1}+2-t $ \\ \hline 
\textbf{4.45} &  1 & $  -t^{-3}+t^{-1}+2\ell^{-3}-t\ell^{-2}-t\ell^2 $ \\
& 2 & $  -t^{-3}+t^{-1}+2-t\ell^{-1}-t\ell $ \\
& 3 & $  -t^{-3}+t^{-1}+2\ell-2t $ \\
& 4 & $  -t^{-3}+t^{-1}+2-2t $ \\ \hline 
\textbf{4.46} & 1 & $  -t^{-1}+t^{-1} \ell^2+t \ell^{-2}-t $ \\
& 2 & $  t^{-1} \ell^{-1}-t^{-1}-t+t\ell $ \\
& 3 & $ 0 $ \\ \hline 
\textbf{4.47} & 1 & $  t^{-3}-t^{-1} \ell^{-2}-t^{-1} \ell^2+t $ \\
& 2 & $  t^{-3}-t^{-1} \ell^{-1}-t^{-1} \ell+t $ \\
& 3 & $  t^{-3}-2t^{-1}+t $ \\ \hline 
\textbf{4.48}, \textbf{4.82}  &  1 & $  -t^{-2} \ell^{-2}-t^{-1}+4-t-t^{2} \ell^2 $ \\
& 2 & $  -t^{-2}\ell-t^{-1}+4-t-t^{2} \ell^{-1} $ \\
& 3 & $  -t^{-2}-t^{-1}+4-t-t^{2} $ \\\hline 
 \textbf{4.49} & 1 & $  t^{-2} \ell^{-2}-t^{-1} \ell^{-2}-t^{-1}+ \ell^2 $ \\
& 2 & $  t^{-2} \ell-t^{-1}-t^{-1} \ell+ \ell^{-1} $ \\
& 3 & $  t^{-2}-2t^{-1}+1 $ \\ \hline 
\textbf{4.50} & 1 & $  -t^{-2}+t^{-1}+2\ell^{-2}-1-t\ell^{-2} $ \\
& 2 & $  -t^{-2}+t^{-1}-1+2\ell-t\ell $ \\
& 3 & $ -t^{-2}+t^{-1}+1-t $ \\ \hline 
%\textbf{4.51} & 1 & $  tl^{-2}-tl^2 $ \\
%& 2 & $ -tl^{-1}+tl $ \\
%& 3 & $ 0 $ \\ \hline 
\textbf{4.52} &  1 & $ -t^{-1}\ell^2+\ell^{-2}+\ell^2-t\ell^{-2} $ \\
& 2 & $  -t^{-1}\ell^{-1}+\ell^{-1}+\ell-t\ell $ \\
& 3 & $  -t^{-1}+2-t $ \\ \hline 
%\textbf{4.53} &  1 & $ -2t^{-1}+4-2t $ \\ \hline 
%\textbf{4.54} &  1 & $  -t^{-1}+2-t $ \\ \hline 
%\textbf{4.55} & 1 & $ 0 $ \\ \hline 
%\textbf{4.56} & 1 & $ 0 $ \\  \hline 
%\textbf{4.57} & 1 & $  -t^{-2}l^2+t^{-1}+l^2-tl^{-2} $ \\
%& 2 & $  -t^{-2}l^{-1}+t^{-1}+l^{-1}-tl $ \\
%& 3 & $  -t^{-2}+t^{-1}+1-t $ \\ \hline 
%\textbf{4.58} & 1 & $ 0 $ \\ \hline 
%\textbf{4.59} & 1 & $  0 $ \\ \hline 
%\textbf{4.60} & 1 & $ -t^{-1}l^{-2}+2-tl^{-2} $ \\
%& 2 & $  -t^{-1}l+2-tl $ \\
%& 3 & $  -t^{-1}+2-t $ \\  \hline 
%\textbf{4.61} & 1 & $  -t^{-1}+2-t $ \\  \hline 
\textbf{4.62} &  1 & $  -t^{-2}-t^{-1}+1+2\ell-t^{3}\ell^{-2} $ \\
& 2 & $  -t^{-2}-t^{-1}+1+2\ell-t^{3}\ell $ \\
& 3 & $  -t^{-2}-t^{-1}+2\ell^{-1}+1-t^{3} $ \\
& 4 & $  -t^{-2}-t^{-1}+3-t^{3} $ \\ \hline 
\textbf{4.63} &  1 & $  -t^{-2}+\ell^{-2}+1+\ell^2-t-t\ell^2 $ \\
& 2 & $  -t^{-2}+\ell^{-1}+1+\ell-t\ell^{-1}-t $ \\
& 3 & $  -t^{-2}+3-2t $ \\ \hline 
\textbf{4.64} & 1 & $  t^{-2}-t^{-1}-t+t^{2} $ \\ \hline 
%\textbf{4.65} & 1 & $  -t^{-2}+2-t^{2} $ \\ \hline 
\textbf{4.66} & 1 & $  t^{-2}-1-t+t^{3}\ell^{-2} $ \\
& 2 & $  t^{-2}+\ell^{-1}-1-\ell-t+t^{3}\ell $ \\
& 3 & $  t^{-2}-1-t+t^{3} $ \\ \hline 
\textbf{4.67} &  1 & $  t^{-2}-t^{-1}-1+t\ell^2 $ \\
& 2 & $  t^{-2}-t^{-1}-1+t\ell^{-1} $ \\
& 3 & $  t^{-2}-t^{-1}-1+t $ \\ \hline 
%\textbf{4.68} & 1 & $ 0 $ \\ \hline 
%\textbf{4.69} & 1 & $  -t^{-1}+2-t $ \\ \hline 
\textbf{4.70} & 1 & $ -t^{-1}+\ell^2+t\ell^2-t^{2}\ell^2 $ \\
& 2 & $  -t^{-1}+\ell^{-1}+t\ell^{-1}-t^{2}\ell^{-1} $ \\
& 3 & $  -t^{-1}+1+t-t^{2} $ \\ \hline 
\end{tabular}
\end{table}

\begin{table}[ht]
\caption{Table of $F$-polynomials, part 4} \label{table-7}
\begin{tabular}{|l|l|l|} \hline
name of the virtual knot & $n$ & $F^{n}(t, \ell)$-polynomials \\ \hline \hline 
%\textbf{4.71} & 1 & $ 0 $ \\ \hline 
%\textbf{4.72} & 1 & $  0 $ \\ \hline 
%\textbf{4.73} & 1 & $ -2t^{-1}+4-2t $ \\ \hline 
%\textbf{4.74} & 1 & $  -t^{-1}+2-t $ \\ \hline 
%\textbf{4.75} & 1 & $ 0 $ \\ \hline 
%\textbf{4.76} & 1 & $  0 $ \\ \hline 
%\textbf{4.77} & 1 & $  0 $ \\ \hline 
\textbf{4.78} & 1 & $  -t^{-2}-t^{-1}-1+4\ell-t^{3}\ell^{-2} $ \\
&  2 & $  -t^{-2}-t^{-1}-1+4\ell-t^{3}\ell $ \\
& 3 &  $ -t^{-2}-t^{-1}+4\ell^{-1}-1-t^{3} $ \\
& 4 & $  -t^{-2}-t^{-1}+3-t^{3} $ \\ \hline 
\textbf{4.79} & 1 & $  2\ell^{-1}-1-t-t^{2}+t^{3}\ell^{-2} $ \\
& 2 & $  3\ell^{-1}-1-\ell-t-t^{2}+t^{3}\ell $ \\
& 3 & $ -1+2\ell-t-t^{2}+t^{3} $ \\ 
& 4 & $ 1-t-t^{2}+t^{3} $ \\ \hline 
\textbf{4.80} & 1 & $  -t^{-3}+4\ell^{-3}-t\ell^{-2}-t-t\ell^2 $ \\
& 2 & $  -t^{-3}+4-t\ell^{-1}-t-t\ell $ \\
& 3 & $  -t^{-3}+4\ell-3t $ \\
& 4 & $  -t^{-3}+4-3t $ \\ \hline 
\textbf{4.81} & 1& $  t^{-3}-t^{-1}\ell^{-2}-t^{-1}-t^{-1}\ell^2+2\ell^3 $ \\
& 2 & $ t^{-3}-t^{-1}\ell^{-1}-t^{-1}-t^{-1}\ell+2 $ \\
& 3 & $  t^{-3}-3t^{-1}+2\ell^{-1} $ \\
& 4 & $  t^{-3}-3t^{-1}+2 $ \\ \hline 
%\textbf{4.82} &  1 & $  -t^{-2}l^{-2}-t^{-1}+4-t-t^{2}l^2 $ \\
%& 2 & $  -t^{-2}l-t^{-1}+4-t-t^{2}l^{-1} $ \\
%& 3 & $  -t^{-2}-t^{-1}+4-t-t^{2} $ \\ \hline 
\textbf{4.83} & 1 & $  -t^{-3}+t^{-2}\ell^{-2}-t^{-1}+2\ell-t^{2}\ell^{-2} $ \\
& 2 & $  -t^{-3}+t^{-2}\ell-t^{-1}+2\ell^{-2}-t^{2}\ell $ \\
& 3 & $  -t^{-3}+t^{-2}-t^{-1}+2\ell-t^{2} $ \\
& 4 & $  -t^{-3}+t^{-2}-t^{-1}+2-t^{2} $ \\ \hline 
\textbf{4.84} & 1 & $  -t^{-2}\ell^{-2}+t^{-1}+t-t^{2}\ell^2 $ \\
& 2 &  $  -t^{-2}l+t^{-1}+t-t^{2}\ell^{-1} $ \\
& 3 & $  -t^{-2}+t^{-1}+t-t^{2}$ \\  \hline 
%\textbf{4.85} & 1 & $  -t^{-2}+2-t^{2} $ \\ \hline 
%\textbf{4.86} & 1 & $  -t^{-2}+2-t^{2} $ \\ \hline 
\textbf{4.87} & 1 & $  -t^{-3}-t^{-1}+4\ell-t^{2}\ell^{-2}-t^{2}\ell^2 $ \\
& 2 & $  -t^{-3}-t^{-1}+4\ell^{-2}-t^{2}\ell^{-1}-t^{2}\ell $ \\
& 3 & $  -t^{-3}-t^{-1}+4\ell-2t^{2} $ \\
& 4 & $  -t^{-3}-t^{-1}+4-2t^{2} $ \\ \hline 
\textbf{4.88} & 1 & $ t^{-3}-t^{-2}\ell^{-2}-t^{-2}\ell^2+t^{-1} $ \\
& 2 & $  t^{-3}-t^{-2}\ell^{-1}-t^{-2}\ell+t^{-1} $ \\
& 3 & $ t^{-3}-2t^{-2}+t^{-1} $ \\ \hline 
\textbf{4.89} & 1 & $ -2t^{-2}+4-2t^{2} $ \\ \hline 
%\textbf{4.90} & 1 & $  0 $ \\  \hline 
\textbf{4.91} & 1 & $ -t^{-3}-t^{-1}+4-t-t^{3} $ \\ \hline 
\textbf{4.92}, \textbf{4.95}, \textbf{4.101} & 1 & $ -t^{-3}+2-t^{3} $ \\ \hline 
\textbf{4.93} & 1 & $ t^{-3}-t^{-2}\ell^{-2}-t^{-2}\ell^2+2\ell^{-1}-t $ \\
& 2 & $  t^{-3}-t^{-2}\ell^{-1}-t^{-2}\ell+2\ell^2-t $ \\
& 3 & $  t^{-3}-2t^{-2}+2\ell^{-1}-t $ \\
& 4 & $  t^{-3}-2t^{-2}+2-t $ \\ \hline 
%\textbf{4.94} & 1 & $  -t^{-1}+2-t $ \\ \hline 
%\textbf{4.95} & 1 & $  -t^{-3}+2-t^{3} $ \\ \hline
\textbf{4.96} & 1 & $  -t^{-2}\ell^{-2}+2-t^{2}\ell^2 $ \\
& 2 & $ -t^{-2}\ell+2-t^{2}\ell^{-1} $ \\
& 3 & $  -t^{-2}+2-t^{2} $ \\  \hline 
\textbf{4.97} &  1 & $  t^{-2}\ell^{-2}-t^{-1}-t^{2}\ell^{-2}+t^{3} $ \\
& 2 & $  t^{-2}\ell-t^{-1}-t^{2}\ell+t^{3} $ \\
& 3 & $ t^{-2}-t^{-1}-t^{2}+t^{3} $ \\ \hline 
%\textbf{4.98} & 1 & $ 0 $ \\ \hline 
%\textbf{4.99} & 1 & $  0 $ \\  \hline 
%\textbf{4.100} & 1 & $ -2t^{-1}+4-2t $ \\ \hline 
%\textbf{4.101} & 1 & $ -t^{-3}+2-t^{3} $ \\ \hline 
\textbf{4.102} & 1 & $ t^{-3}-t^{-1}-t+t^{3} $ \\  \hline 
\end{tabular}
\end{table}

\begin{table}[ht]
\caption{Table of $F$-polynomials, part 5} \label{table-8}
\begin{tabular}{|l|l|l|} \hline
name of the virtual knot & $n$ & $F^{n}(t, \ell)$-polynomials \\ \hline \hline 
\textbf{4.103} &1 & $  -t^{-2}\ell^{-2}-t^{-2}\ell^2+t^{-1}+2\ell^{-1}-t^{3} $\\
& 2 & $  -t^{-2}\ell^{-1}-t^{-2}\ell+t^{-1}+2\ell^2-t^{3} $ \\
& 3 & $  -2t^{-2}+t^{-1}+2\ell^{-1}-t^{3} $ \\
& 4 & $  -2t^{-2}+t^{-1}+2-t^{3} $ \\ \hline 
\textbf{4.104} & 1 & $ \ t^{-3}-2+t^{3} $ \\  \hline 
%\textbf{4.105} & 1 & $ 0 $ \\ \hline 
%\textbf{4.106} & 1 & $  -t^{-2}+2-t^{2} $ \\ \hline 
%\textbf{4.107} & 1 & $  0 $ \\ \hline 
%\textbf{4.108} & 1 & $  0 $ \\  \hline 
\end{tabular}
\end{table}

%%%%%%%%%%%

\end{document}